\definecolor{britishracinggreen}{rgb}{0.0, 0.26, 0.15}
\definecolor{amaranth}{rgb}{0.9, 0.17, 0.31}
\newtheorem{thm}{Theorem}
\newtheorem{cor}[thm]{Corollary}
\newtheorem{lem}[thm]{Lemma}
\newtheorem{prop}[thm]{Proposition}
\newtheorem{remark}{Remark}
\theoremstyle{definition}
\newtheorem{defn}{Definition}
\newtheorem{exmp}[thm]{Example}
\newtheorem{prob}[defn]{Problem}
\newcommand{\set}[2]{\ensuremath{\left\{ #1\,:\; #2\right\}}}
\newcommand{\sss}[1]{{\scriptscriptstyle #1}}
\newcommand{\norm}[2]{\Vert #2\Vert_{\sss{#1}}}
\newcommand{\saM}{\ensuremath{M_{sa}}}
\newcommand{\pp}{\mathscr P}
\newcommand{\qq}{\mathscr Q}
\newcommand{\aaa}{\mathscr A}
\newcommand{\ttt}{\mathscr T}
\newcommand{\bb}{\mathscr B}
\newcommand{\dd}{\mathscr D}
\newcommand{\oo}{\mathscr O}
\newcommand{\mm}{\mathscr M}
\newcommand{\nn}{\mathscr N}
\newcommand{\ff}{\mathscr F}
\newcommand{\xx}{\mathscr X}
\newcommand{\N}{\mathds N}
\newcommand{\R}{\mathds R}
\newcommand{\C}{\mathds C}
\newcommand{\Z}{\mathds Z}
\mathchardef\mhyphen="2D
\newcommand{\msp}{(X,\aaa,\mu)}
\newcommand{\integral}[1]{\int #1\,{\rm d}\mu}
\newcommand{\rintegral}[2]{\int_{#1} #2 \,{\rm d}\mu}
\newcommand{\muaew}{$\mu$-{\rm a.e.}}
\newcommand{\ca}[1]{{\rm ca}(#1)}
\newcommand{\converges}[1]{\mathop{\overset{\text{#1}}{\longrightarrow}}}
\newcommand\restr[2]{{
  \left.\kern-\nulldelimiterspace 
  #1 
  \vphantom{\big|} 
  \right|_{#2} 
  }}
\newcommand{\mysetminus}{\mathbin{\fgebackslash}}
\DeclareMathOperator{\spn}{span}
\begin{document}

\title{On different modes of order convergence and some applications}

\author{Kevin Abela}
\address{
Kevin Abela \\
Department of Mathematics \\
Faculty of Science \\
University of Malta \\
Msida MSD 2080, Malta} \email {kevin.abela.11@um.edu.mt}

\author{Emmanuel Chetcuti}
\address{
Emmanuel Chetcuti,
Department of Mathematics\\
Faculty of Science\\
University of Malta\\
Msida MSD 2080  Malta} \email {emanuel.chetcuti@um.edu.mt}

\author{Hans Weber}
\address{Hans Weber\\
Dipartimento di matematica e informatica\\
Universit\`a degli Studi di Udine\\
1-33100 Udine\\
Italia}
\email{hans.weber@uniud.it}

\date{\today}
\begin{abstract}
Different notions for order convergence have been considered by various authors.  Associated to every notion of order convergence corresponds a topology, defined by taking as the closed sets those subsets of the poset satisfying that no net in them order converges to a point that is outside of the set.  We shall give  a thorough overview of these different notions and provide a systematic comparison of the associated topologies.  Then, in the last section we shall give an application of this study by giving a result on von Neumann algebras complementing the study started in  \cite{ChHaWe}.   We show that for every atomic von Neumann algebra (not necessarily $\sigma$-finite) the restriction of the order topology to bounded parts of $M$ coincides with the restriction of the $\sigma$-strong topology $s(M,M_\ast)$.  We recall that the methods of \cite{ChHaWe} rest heavily on the assumption of $\sigma$-finiteness.  Further to this,  for a semi-finite measure space, we shall give a complete picture of the relations between the topologies on $L^\infty$ associated with the duality $\langle L^1, L^\infty\rangle$ and its order topology.
\end{abstract}
\subjclass[2000]{Primary: 06F30; Secondary: 46L10}
\keywords{Order convergence, order topology, von Neumann algebra.}
\maketitle

\section{Introduction}

Order convergence in partially ordered sets has for long been studied.  One finds different definitions for order convergence that all reduce -- for complete lattices -- to the requirement $\limsup=\liminf$.  Associated to each notion of order convergence there exists a topology, defined by taking as the closed sets those subsets of the poset satisfying that no net in them order converges to a point that is outside of the set.  This topology is called the order topology and being the finest topology preserving order convergence,  it is intrinsic to the poset.  The simplest example is the set of real numbers. The order topology on the real line is the standard topology and if a sequence converges w.r.t. the order topology, it is also order convergent.  Caution is required here because in general it is not so that the nets that converge w.r.t. the order topology are also order convergent.  Order convergence is not topological in general.  The question of characterizing those posets for which  order convergence is topological has received considerable attention. However, we shall not follow this path.  Our work continues on the work in \cite{BuChWe2012,ChHaWe,ChHa2020,BuChWe2020,Bohata2018} in which the order topology associated to a partially ordered structure arising naturally in the realm of Functional Analysis is studied and related to the functional-analytic properties of the structure.

 A systematic treatment of the order topology on various  structures associated with a von Neumann algebra  was carried out in \cite{ChHaWe}. In that case, the order topology comes from the standard  operator order defined on the self-adjoint operators acting on a Hilbert space $H$; namely, for self-adjoint operators $T$ and $S$, we have  $T\le S$  if the operator $S-T$ has a positive spectrum.  It was shown that the order topology on  the self-adjoint part of a $\sigma$-finite von Neumann algebra,  albeit far from being a  linear topology in general, on bounded parts, coincides  with the strong operator topology. The proof is based on the Noncommutative Egoroff Theorem and rests heavily on the assumption of $\sigma$-finiteness.  Besides,  finite $\sigma$-finite von Neumann algebras were characterized in terms of the order topological properties of the projection lattice.

 The aim of the present paper is two-fold.  In the first section we shall present a thorough overview of the different definitions for order convergence that are found in the literature and also provide a systematic comparison of the associated order topologies. In the second section we shall consider some applications complementing the work done in \cite{ChHaWe, ChHa2020}.  We shall first consider a semi-finite measure and study the relation between the locally convex topologies associated with $L^\infty$ and its order topology.  Then we shall make use of the results obtained in the first section to show that for atomic von Neumann algebras the underlying assumption of $\sigma$-finiteness is redundant in \cite[Corollary 4.4]{ChHaWe}.

\section{On relations between different notions of order convergence in a partially ordered set}

Let $(\pp,\le)$ be a partially ordered set.  A subset $\dd$ of $\pp$ is \emph{directed} provided it is nonempty and every finite subset of $\dd$ has an upper bound in $\dd$.  Dually, a nonempty subset $\ff$ of $\pp$ is \emph{filtered} if every finite subset of $\ff$ has a lower bound in $\ff$.  If the set of upper bounds of a subset $\xx$ of $\pp$ has a least element, we call this element the \emph{supremum} (or least upper bound) of $\xx$ and denote it by $\sup X$ or $\vee X$.  Similarly, the \emph{infimum}  (or greatest lower bound) is written as $\inf X$ or $\wedge X$.  The poset $(\pp,\le)$ is said to be:
\begin{itemize}
  \item a \emph{lattice} if every finite subset of $\pp$ has a supremum and infimum.
  \item \emph{conditionally complete} if every subset of $\pp$ that is bounded from above has a supremum and every subset of $\pp$ that is bounded from below has an infimum.
  \item \emph{conditionally monotone complete} if every directed subset of $\pp$ that is bounded from above has a supremum and every filtered subset of $\pp$ that is bounded from below has an infimum.
\end{itemize}
A function $\varphi$ from a poset $(\pp,\le)$ into another poset $(\qq,\le)$ is said to be \emph{isotone} if $\varphi(a)\le\varphi(b)$ for every $a,b\in\pp$ satisfying $a\le b$.  \emph{Antitone} functions between posets are defined dually. When $\varphi$ is isotone and injective, and $\varphi^{-1}$ (defined on the range of $\varphi$) is again isotone, we say that $\varphi$ is an \emph{order isomorphism}. The posets $(\pp,\le)$ and $(\qq,\le)$ are said to be order isomorphic when there is a surjective order isomorphism between them.

A \emph{net} in a set $X$ is a function $\varphi$ from a directed set $\Gamma$ into $X$.  We shall write $x_\gamma$ instead of $\varphi(\gamma)$ and denote the net by $(x_\gamma)_{\gamma\in\Gamma}$.   For a net $(x_{\gamma})_{\gamma \in \Gamma}$, the subset $E_{\Gamma}(\gamma_{0}) = \{x_{\gamma} : \gamma \geq \gamma_{0} \}$ is  called a \emph{residual} of $(x_{\gamma})_{\gamma \in \Gamma}$.  If $P(x)$ is a property of the elements $x\in X$ and $P(x_\gamma)$ is true for some residual $E_\Gamma(\gamma_0)$, then we say that  the net $(x_\gamma)_{\gamma\in\Gamma}$ \emph{satisfies $P(x)$ eventually}.    The subset $\Gamma^{'} \subseteq \Gamma$ is \emph{cofinal} if for every $\gamma \in \Gamma$ there exists $\gamma_{0} \in \Gamma^{'}$ such that $\gamma \leq \gamma_{0}$.   It is easily seen that if $\Gamma'$ is cofinal in $\Gamma$, then $\Gamma'$ is directed and therefore $(x_\gamma)_{\gamma\in\Gamma'}$ is a net.  This net is said to be a \emph{cofinal subnet} of $(x_{\gamma})_{\gamma \in \Gamma}$.

A net $(x_\gamma)_{\gamma\in\Gamma}$ in a partially ordered set $\pp$ is said to be \emph{monotonic increasing} or \emph{monotonic decreasing} depending on whether the function $\gamma\mapsto x_\gamma$ is isotone or antitone, respectively.  A net is called \emph{monotone} if it is either monotonic increasing or monotonic decreasing.  If $(x_\gamma)_{\gamma\in\Gamma}$ is monotonic increasing and its supremum exists and equals $x$ we write $x_\gamma\uparrow x$.  Dually, $x_\gamma\downarrow x$ means that the net $(x_\gamma)_{\gamma\in\Gamma}$ is monotonic decreasing with infimum equal to $x$.

 Order convergence in posets has been studied since at least the time of Birkhoff.  The motivation can be traced to the squeezing lemma, well known to every student of calculus: Let $(y_n)_{n\in\N}$ be   an increasing sequence  of real numbers  and $(z_n)_{n\in\N}$ be a decreasing  sequence  of real numbers such that $y_n\le z_n$ for all $n$. Suppose that  the supremum of $(y_n)_{n\in\N}$ is the same as the infimum of $(z_n)_{n\in\N}$ and equals to $\alpha$. Then, for any other sequence, $(x_n)_{n\in\N}$, squeezed between $(y_n)_{n\in\N}$ and $(z_n)_{n\in\N}$ (i.e. $y_n\le x_n \le z_n$ for all $n$)  we have that $(x_n)_{n\in\N}$ converges to $\alpha$.  The following three different generalizations for posets can be found in the literature.  We call them O$_1$-, O$_2$- and O$_3$- convergence.  For every $a,b\in \pp$ we define $(\leftarrow,a]:=\{x\in\pp:x\le a\}$;  $[a,\rightarrow)$ is defined dually, and $[a,b]:=\{x\in\pp:a\le x\le b\}$.

 \begin{defn}
   Let $(x_\gamma)_{\gamma\in\Gamma}$ be a net and  $x$ a point in a poset $\pp$.
   \begin{enumerate}[{\rm(i)}]
     \item $(x_\gamma)_{\gamma\in\Gamma}$ is said to \emph{O$_1$-converge} to $x$ in $\pp$ if there exist two nets $(y_\gamma)_{\gamma\in\Gamma}$, $(z_\gamma)_{\gamma\in\Gamma}$ in $\pp$ such that eventually $y_\gamma\le x_\gamma\le z_\gamma$, $y_\gamma\uparrow x$ and $z_\gamma\downarrow x$.
     \item $(x_\gamma)_{\gamma\in \Gamma}$ is said to \emph{O$_2$-converge} to $x$ in $\pp$ if  there exists a directed subset $M\subset \pp$, and a filtered subset $N\subset\pp$,  such that $\vee M=\wedge N=x$, and for every $(m,n)\in M\times N$ the net is eventually contained in $[m,n]$.
     \item $(x_\gamma)_{\gamma\in \Gamma}$ is said to \emph{O$_3$-converge} to $x$ in $\pp$ if  there exist two subsets  $M$ and  $N$ of $\pp$ such that $\vee M=\wedge N=x$, and for every $ (m,n)\in M\times N$ the net is eventually contained in $[m,n]$.
   \end{enumerate}
 \end{defn}

 The notion of O$_1$-convergence can be traced back to Birkhoff \cite{Birkhoff1940} and Kantorovich \cite{Kantorovich1935}.  Note that, in contrast to O$_1$-convergence, in O$_2$-convergence, the controlling nets are not indexed by the same directed set of the original net.  O$_2$-convergence is due McShane \cite{McShane1953}.  The notion of O$_3$-convergence can be found in \cite{Wolk1961} whereby he gives references to \cite{Rennie1951} and \cite{Ward1955}.  It is easy to see that:
  \begin{itemize}
  \item For a net $(x_\gamma)_{\gamma\in\Gamma}$ in a poset $\pp$,
 \[x_\gamma\converges{\text{O$_1$}} x\ \Rightarrow\  x_\gamma\converges{\text{O$_2$}}x\ \Rightarrow\  x_\gamma\converges{\text{O$_3$}} x\qquad (x\in\pp).\]
 \item Every eventually constant net is O$_1$-convergent to its eventual value.
 \item Every cofinal subnet of an O$_i$-convergent net is O$_i$-convergent to the same limit (where $i\in\{1,2,3\}$).
 \end{itemize}
Let us verify that when $(x_\gamma)_{\gamma\in\Gamma}$ is a net in a conditionally complete lattice, the expression `$(x_\gamma)_{\gamma\in\Gamma}$ order converges to $x$'  conveys the intuitive meaning
\begin{equation}\label{e3}
  \liminf_\gamma x_\gamma=\sup_{\gamma\in\Gamma}\inf_{\gamma'\ge \gamma} x_\gamma=x=\inf_{\gamma\in\Gamma}\sup_{\gamma'\ge \gamma} x_\gamma=\limsup_\gamma x_\gamma.
\end{equation}
It is easy to see that if $(x_\gamma)_{\gamma\in\Gamma}$ satisfies (\ref{e3}) then it O$_1$-converges to $x$.  Moreover, if there exist subsets $\mm$ and $\nn$ of $\pp$ satisfying $\sup\mm=\inf\nn=x$, and such that for every $(m,n)\in\mm\times \nn$
\[x_\gamma\in[m,n]\quad \text{eventually},\]
then $\liminf_\gamma x_\gamma\ge \sup\mm=x$ and $\limsup_\gamma x_\gamma\le \inf\nn=x$.  This shows that when $\pp$ is a conditionally complete lattice all three notions of order convergence coalesce to the condition expressed in (\ref{e3}).  In this case we simply say $(x_\gamma)_{\gamma\in\Gamma}$ O-converges to $x$.

The following example shows that -- even when the poset is a lattice -- O$_2$-convergence does not imply O$_1$-convergence.  This question was first answered in \cite{MathewsAnderson1967} but the example that we shall give here is credited to D. Fremlin \cite[Ex. 2, pg. 140]{ShaeferBLattices}.

\begin{exmp}[O$_2$-convergence is not the same as O$_1$-convergence]
Let $X$ be an uncountable discrete space and let $\dd$ denote the family of all the cofinite subsets of $X$.  Let $X_0$ denote the one-point compactification of $X$.  This can be identified with the set $X\cup\{\infty\}$, equipped with the topology $\mathscr T:=2^X\cup \hat{\dd}$, where $\hat{\dd}:=\set{D\cup\{\infty\}}{D\in\dd}$.  The net $\bigl(\chi_{D}\bigr)_{D\in\hat{D}}$ belongs to the Banach lattice $C(X_0,\R)$ and  $\chi_D\downarrow 0$. So, if $(x_n)_{n\in\N}$ is an arbitrary sequence in $X_0$ such that all terms are distinct, the corresponding sequence of characteristic functions  $(f_n)_{n\in\N}$ is O$_2$-convergent to $0$.
On the other-hand, if $f\in C(X_0,\R)$ and  $f(x)\ge 1$ holds on some infinite subset of $X_0$ then, for every $\varepsilon>0$, there exists  $D\in \hat \dd$ such that $f(x)>1-\varepsilon$ holds on $D$.  So, if $(g_n)_{n\in\N}$ is a sequence in $C(X_0,\R)$ satisfying $f_n\le g_n\le g_{n+1}$ for every $n\in\N$ then,  for every $\varepsilon>0$, there exists an infinite subset $Y\subset X_0$ such that every $g_n$ is at least $1-\varepsilon$ on $Y$.  This implies the zero function cannot be the infimum of $(g_n)_{n\in\N}$ and therefore $(f_n)_{n\in\N}$ cannot be O$_1$-convergent to $0$.
\end{exmp}

This following example is due to E.S.Wolk \cite{Wolk1961} and shows that in general O$_2$-convergence is not the same as O$_3$-convergence.

\begin{exmp}[O$_3$-convergence is not the same as O$_2$-convergence]
Let $A:=\{a_n:n\in\N\}$ and $B:=\{b_n:n\in\N\}$ be two countable sets and let $\pp:=A\cup B\cup\{\mathbf 0,\mathbf 1\}$ equipped with the partial order defined by:
\[\mathbf 0\le x\le \mathbf 1\ (\forall x\in\pp)\qquad\text{and}\qquad x\le y\ \Leftrightarrow\ x=a_n,\, y=b_m\ (\exists n\le m).\]
It is clear that $\pp$ contains no infinite directed (or filtered) set.  On the other-hand, $\sup A=\mathbf 1$.  Thus, the sequence $(b_n)_{n\in\N}$ is O$_3$-convergent to $\mathbf 1$ but not O$_2$-convergent.
\end{exmp}

 For any subset $\dd$ of the poset $\pp$, define
 \begin{align*}
 \dd^+:=&\set{x\in\pp}{(\forall d\in\dd)\,x\ge d},\, \text{and}\\
 \dd^-:=&\set{x\in\pp}{(\forall d\in\dd)\,x\le d}.
 \end{align*}
   We recall that with respect to set-theoretic inclusion the set $\hat\pp:=\set{\dd\subset\pp}{(\dd^+)^-=\dd}$ becomes a complete lattice  satisfying:
 \begin{enumerate}
 \item $\aaa^-$  belongs to $\hat\pp$ for every $\aaa\subset\pp$.
 \item If $\set{\aaa_\iota}{\iota\in\mathds I}$ is a subset of $\hat\pp$ then $\sup_{\iota\in\mathds I}\aaa_\iota=\left(\bigcup_{\iota\in\mathds I}\aaa_\iota\right)^{+-}$ and $\inf_{\iota\in\mathds I}\aaa_{\iota}=\bigcap_{\iota\in\mathds I}\aaa_{\iota}$.
   \item $(\leftarrow,x]\in\hat\pp$ for every $x\in\pp$ and the function $\varphi:x\mapsto (\leftarrow,x]:\pp\to\hat\pp$ is an order isomorphism.
     \item $\varphi[\pp]$ is \emph{join-dense} and \emph{meet-dense} in $\hat\pp$, i.e.
   \[\sup\,\set{\varphi(x)}{x\in\pp,\,\varphi(x)\le a}\,=\, a\, =\, \inf\,\set{\varphi(x)}{x\in\pp,\,\varphi(x)\ge a},\]
   for every $a\in\hat\pp$.  In particular, it follows that $\varphi$ preserves all suprema and infima that exist in $\pp$; i.e. if $\aaa\subset\pp$ and $x\in\pp$, then $\sup\varphi[\aaa]=\varphi(x)$ if and only if $\sup \aaa=x$ (and dually, $\inf\varphi[\aaa]=\varphi(x)$ if and only if $\inf\aaa=x$).
   \item $\aaa^-=\inf\varphi[\aaa]$ and $\aaa^{+-}=\sup\varphi[\aaa]$ for every $\aaa\subset\pp$.
   \item $\inf\varphi[\aaa^+]=\sup\varphi[\aaa]$ and $\sup\varphi[\aaa^-]=\inf\varphi[\aaa]$ for every $\aaa\subset \pp$.
 \end{enumerate}
 The complete lattice $\hat\pp$ is called the \emph{Dedekind-MacNeille completion} of $\pp$.  It is characterized -- up to order-isomorphism -- as the unique complete lattice containing $\pp$ as a  simultaneously join-dense and meet-dense subset.

In  \cite{Birkhoff1967ThirdEd} one finds yet another definition for `order convergence' based on the embedding of the poset in its Dedekind-MacNeille completion:

\begin{defn}
A net $(x_\gamma)_{\gamma\in\Gamma}$ in a poset $\pp$ is said to \emph{O\textsuperscript{{\tiny{DM}}}-converge} to $x\in\pp$ if the net $\bigl(\varphi(x_\gamma)\bigr)_{\gamma\in\Gamma}$ O-converges to $\varphi(x)$ in $\hat \pp$.
\end{defn}

\begin{remark}
  Note that
\[x_\gamma\converges{O$^{\sss{\text{DM}}}$}x\,\Longleftrightarrow\liminf_\gamma\varphi(x_\gamma)=
\limsup_\gamma\varphi(x_\gamma)=\varphi(x).\]
The comment after the definition of order convergence found in \cite[pg. 244]{Birkhoff1967ThirdEd} shows that Birkhoff does not make a proper distinction between these modes of convergence, particularly he erroneously writes that O$_1$-convergence is equivalent to O\textsuperscript{\tiny{DM}}-convergence (compare to Theorem \ref{t1}).
\end{remark}

\begin{prop}\label{p1} Let $(x_\gamma)_{\gamma\in\Gamma}$ be a net in a poset $\pp$.  For any $x\in\ \pp$ the following three statements are equivalent:
\begin{enumerate}[{\rm(i)}]
\item $x_\gamma\converges{O$_3$}x$,
\item $\sup \bigcup_{\gamma\in\Gamma}E_\Gamma(\gamma)^-=x=\inf\bigcup_{\gamma\in\Gamma}E_\Gamma(\gamma)^+$,
\item $\bigcap_{\gamma\in\Gamma}E_\Gamma(\gamma)^{+-}=(\leftarrow,x]$ and $\bigcap_{\gamma\in\Gamma}E_\Gamma(\gamma)^{-+}=[x,\rightarrow)$.
    \end{enumerate}
\end{prop}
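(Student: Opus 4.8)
The plan is to prove (ii)$\Leftrightarrow$(iii) by a direct manipulation of the polar operators $\aaa\mapsto\aaa^{+}$ and $\aaa\mapsto\aaa^{-}$, and (i)$\Leftrightarrow$(ii) by exhibiting explicit control sets in one direction and a squeezing argument in the other. Throughout I would use the elementary remark that, for a subset $S$ of a poset, $\inf S=x$ holds exactly when $S^{-}=(\leftarrow,x]$, and dually $\sup S=x$ holds exactly when $S^{+}=[x,\rightarrow)$.

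First I would unwind the unions appearing in (ii). Since each residual $E_\Gamma(\gamma)$ is nonempty and $E_\Gamma(\gamma)^{-}$ increases with $\gamma$, membership $m\in\bigcup_{\gamma\in\Gamma}E_\Gamma(\gamma)^{-}$ says exactly that $m\le x_{\gamma'}$ holds eventually, and likewise $n\in\bigcup_{\gamma\in\Gamma}E_\Gamma(\gamma)^{+}$ says that $x_{\gamma'}\le n$ holds eventually. Writing $L:=\bigcup_{\gamma\in\Gamma}E_\Gamma(\gamma)^{-}$ and $U:=\bigcup_{\gamma\in\Gamma}E_\Gamma(\gamma)^{+}$, a direct reading of the definitions gives the purely set-theoretic identities
\[\bigcap_{\gamma\in\Gamma}E_\Gamma(\gamma)^{+-}=U^{-}\qquad\text{and}\qquad\bigcap_{\gamma\in\Gamma}E_\Gamma(\gamma)^{-+}=L^{+},\]
because $p$ lies in the first intersection precisely when it lies below every upper bound of every residual, that is, when $p\in U^{-}$, and the second identity is dual. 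Feeding these into the elementary remark, condition (iii) becomes the conjunction ``$\sup L=x$ and $\inf U=x$'', which is exactly (ii); so (ii)$\Leftrightarrow$(iii) drops out at once.

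For (ii)$\Rightarrow$(i) I would take the witnessing sets to be $M:=L$ and $N:=U$: then $\vee M=\wedge N=x$ by (ii), and for $(m,n)\in M\times N$ the net is eventually $\ge m$ and eventually $\le n$, hence eventually in $[m,n]$ after passing to a common index (using that $\Gamma$ is directed). Conversely, let $M,N$ witness O$_3$-convergence to $x$. Fixing any $n\in N$ shows $M\subseteq L$, and dually $N\subseteq U$. If $m\in L$, then $m\le x_{\gamma'}$ eventually, while $x_{\gamma'}\le n$ eventually for each $n\in N$; evaluating both at one index of the (nonempty) common residual gives $m\le n$ for all $n\in N$, hence $m\le\wedge N=x$. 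Thus $x$ is an upper bound of $L$, and since $M\subseteq L$ any upper bound of $L$ dominates $\vee M=x$; therefore $\vee L=x$, and dually $\wedge U=x$, which is (ii).

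The argument is essentially bookkeeping with the Galois connection between $(\cdot)^{+}$ and $(\cdot)^{-}$; the step I expect to need the most care is the second half of (i)$\Rightarrow$(ii), namely upgrading ``$x$ bounds $L$ from above'' to ``$x$ is the \emph{least} upper bound of $L$''. This is precisely the point at which the lower control set $M$ and the upper control set $N$ are both genuinely used: $N$ to push every eventual lower bound below $x$, and $M$ to obstruct any strictly smaller upper bound of $L$. One further point to keep in mind is that the witnessing sets in the definition of O$_3$-convergence must be read as nonempty, without which the ``choose any $n\in N$'' steps --- and, indeed, the statement itself --- would break down.
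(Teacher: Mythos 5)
Your proof is correct, but it takes a genuinely different and more elementary route than the paper's. The paper proves the cycle (i)$\Rightarrow$(ii)$\Rightarrow$(iii)$\Rightarrow$(i) and routes the substance of (i)$\Rightarrow$(ii) and (ii)$\Rightarrow$(iii) through the embedding $\varphi$ into the Dedekind--MacNeille completion, invoking the facts $\sup\varphi[\aaa^-]=\inf\varphi[\aaa]$, $\inf\varphi[\aaa^+]=\sup\varphi[\aaa]$ and the preservation of suprema and infima listed before the proposition. You instead stay entirely inside $\pp$: the identities $\bigcap_{\gamma}E_\Gamma(\gamma)^{+-}=\bigl(\bigcup_{\gamma}E_\Gamma(\gamma)^{+}\bigr)^{-}$ and $\bigcap_{\gamma}E_\Gamma(\gamma)^{-+}=\bigl(\bigcup_{\gamma}E_\Gamma(\gamma)^{-}\bigr)^{+}$, together with the remark that $S^{-}=(\leftarrow,x]$ iff $\inf S=x$, make (ii)$\Leftrightarrow$(iii) immediate, and your two-sided squeezing argument for (i)$\Leftrightarrow$(ii) --- in particular the step where the upper control set $N$ forces every eventual lower bound below $x$ while $M\subseteq L$ blocks any smaller upper bound of $L$ --- is sound. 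What the paper's detour buys is that the same $\varphi$-computations are reused verbatim in Theorem \ref{t1}(i) to identify O$_3$-convergence with O\textsuperscript{\tiny{DM}}-convergence via $\limsup$ and $\liminf$ in $\hat\pp$; what yours buys is a self-contained argument needing nothing beyond the Galois connection between $(\cdot)^{+}$ and $(\cdot)^{-}$. Your closing caveat about nonemptiness of the witnessing sets is well taken: if $N=\emptyset$ were allowed (with $\inf\emptyset$ read as a greatest element), the implication (i)$\Rightarrow$(ii) genuinely fails, and the paper's own chain of inequalities ending in $\sup_{(m,n)\in\mm\times\nn}\varphi(m)=\varphi(x)$ tacitly assumes $\mm\times\nn\neq\emptyset$ as well.
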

\begin{proof}
{\rm(i)} implies {\rm(ii)}. Assume that $x_\gamma\converges{O$_3$}x$ and let $\mm$, $\nn$ be subsets of $\pp$ satisfying $\sup \mm=\inf\nn=x$, and such that for every $(m,n)\in\mm\times\nn$ there exists $\gamma(m,n)$ such that $E_\Gamma(\gamma(m,n))\subset [m,n]$.  Then
\begin{align*}
\sup\varphi\left[\bigcup_{\gamma\in\Gamma}E_\Gamma(\gamma)^-\right]\,&=\,\sup\bigcup_{\gamma\in\Gamma}\varphi[E_\Gamma(\gamma)^-]\\
&\ge\,\sup\bigcup_{(m,n)\in\mm\times\nn}\varphi[E_\Gamma(\gamma(m,n))^-]\\
&=\,\sup_{(m,n)\in\mm\times\nn}\sup\varphi[E_\Gamma(\gamma(m,n))^-]\\
&=\,\sup_{(m,n)\in\mm\times\nn}\inf\varphi[E_{\Gamma}(\gamma(m,n))]\,\ge\,\sup_{(m,n)\in\mm\times\nn}\varphi(m)\,=\,\varphi(x),
\end{align*}
and therefore $\sup\bigcup_{\gamma\in\Gamma} E_\Gamma(\gamma)^-\,\ge\, x$.  Dually, one can show that $\inf\bigcup_{\gamma\in\Gamma}E_\Gamma(\gamma)^+\,\le\,x$.  On the other-hand, observe that if $s\in E_{\Gamma}(\gamma_s)^-$ and $t\in E_\Gamma(\gamma_t)^+$, then $s\le x_\gamma\le t$ holds for every $\gamma$ satisfying $\gamma\ge \gamma_s$ and $\gamma\ge\gamma_t$.  This shows that $\sup\bigcup_{\gamma\in\Gamma} E_\Gamma(\gamma)^-\,\le\,\inf\bigcup_{\gamma\in\Gamma}E_\Gamma(\gamma)^+$, and therefore
\[x\,\le\,\sup\bigcup_{\gamma\in\Gamma} E_\Gamma(\gamma)^-\,\le\,\inf\bigcup_{\gamma\in\Gamma}E_\Gamma(\gamma)^+\,\le\,x.\]
This proves that {\rm(i)} implies {\rm(ii)}.  To show that {\rm(ii)} implies {\rm(iii)} we first observe that:
\begin{equation}\label{e1}
t\in\bigcap_{\gamma\in\Gamma}E_\Gamma(\gamma)^{-+}\,\Leftrightarrow\,\varphi(t)\ge\sup\bigcup_{\gamma\in\Gamma}\varphi[E_\Gamma(\gamma)^-],
\end{equation}
and
\begin{equation}\label{e2}
t\in\bigcap_{\gamma\in\Gamma}E_\Gamma(\gamma)^{+-} \,\Leftrightarrow\,\varphi(t)\le\inf\bigcup_{\gamma\in\Gamma}\varphi[E_\Gamma(\gamma)^+].
\end{equation}
If $\sup\bigcup_{\gamma\in\Gamma}E_\Gamma(\gamma)^-=x$,  by (\ref{e1}) it follows that $t\in\bigcap_{\gamma\in\Gamma}E_\Gamma(\gamma)^{-+}$ if and only if $\varphi(t)\ge \varphi(x)$, i.e. $\bigcap_{\gamma\in\Gamma}E_\Gamma(\gamma)^{-+}=[x,\rightarrow)$.  Dually, if $\inf\bigcup_{\gamma\in\Gamma}E_\Gamma(\gamma)^+=x$, then $\bigcap_{\gamma\in\Gamma}E_\Gamma(\gamma)^{+-}=(\leftarrow,x]$.\\
{\rm(iii)} implies {\rm(i)}.  Let $\mm:=\bigcup_{\gamma\in\Gamma} E_\Gamma(\gamma)^-$ and $\nn:=\bigcup_{\gamma\in\Gamma}E_\Gamma(\gamma)^+$.  If $\bigcap_{\gamma\in\Gamma}E_\Gamma(\gamma)^{-+}=[x,\rightarrow)$, then
\[t\ge s\ (\forall s\in \mm)\Leftrightarrow  t\in\bigcap_{\gamma\in\Gamma}E_\Gamma(\gamma)^{-+}=[x,\rightarrow),\]
and if $\bigcap_{\gamma\in\Gamma} E_\Gamma(\gamma)^{+-}=(\leftarrow,x]$, then
\[t\le s\ (\forall s\in\nn)\Leftrightarrow t\in\bigcap_{\gamma\in\Gamma} E_\Gamma(\gamma)^{+-}=(\leftarrow,x].\]
This shows that $\sup \mm=\inf\nn=x$.  By definition of $\mm$ and $\nn$ it is clear that for every $(m,n)\in\mm\times\nn$ there exists $\gamma(m,n)\in\Gamma$ such that $m\in E_\Gamma(\gamma(m,n))^-$ and $n\in E_\Gamma(\gamma(m,n))^+$.  So $(x_\gamma)_{\gamma\in\Gamma}$ is O$_3$-convergent to $x$.
\end{proof}

\begin{remark}
In particular, Proposition \ref{p1} shows that the O$_3$-limit (and therefore the O$_2$-limit and the O$_1$-limit) of a net is uniquely determined when it exists.
\end{remark}

\begin{thm}\label{t1}
\begin{enumerate}[{\rm(i)}]
  \item In a poset, O$_3$-convergence is equivalent to O\textsuperscript{\tiny{DM}}-convergence.
  \item In a lattice,  O$_2$-convergence, O$_3$-convergence and O\textsuperscript{\tiny{DM}}-convergence are equivalent.
\end{enumerate}
\end{thm}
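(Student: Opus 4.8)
The plan is to reduce both items to Proposition \ref{p1} by computing the $\liminf$ and $\limsup$ of the image net $\bigl(\varphi(x_\gamma)\bigr)_{\gamma\in\Gamma}$ inside the Dedekind--MacNeille completion $\hat\pp$, and, for the lattice statement, by exhibiting explicit controlling sets built from the residuals.

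\emph{Item (i).} Recall from the Remark that $x_\gamma\converges{O$^{\sss{\text{DM}}}$}x$ means $\liminf_\gamma\varphi(x_\gamma)=\limsup_\gamma\varphi(x_\gamma)=\varphi(x)$, all computed in the complete lattice $\hat\pp$. First I would evaluate the inner bounds: since $\{x_{\gamma'}:\gamma'\ge\gamma\}=E_\Gamma(\gamma)$, property (5) of $\hat\pp$ gives $\inf_{\gamma'\ge\gamma}\varphi(x_{\gamma'})=\inf\varphi[E_\Gamma(\gamma)]=E_\Gamma(\gamma)^-$ and $\sup_{\gamma'\ge\gamma}\varphi(x_{\gamma'})=\sup\varphi[E_\Gamma(\gamma)]=E_\Gamma(\gamma)^{+-}$. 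Using property (6) — that is, $\sup\varphi[\aaa^-]=\inf\varphi[\aaa]$ and $\inf\varphi[\aaa^+]=\sup\varphi[\aaa]$ — one rewrites $E_\Gamma(\gamma)^-=\sup\varphi[E_\Gamma(\gamma)^-]$ and $E_\Gamma(\gamma)^{+-}=\inf\varphi[E_\Gamma(\gamma)^+]$; taking the outer $\sup$, resp.\ $\inf$, over $\gamma$ then yields
\[\liminf_\gamma\varphi(x_\gamma)=\sup\varphi\Bigl[\bigcup_{\gamma\in\Gamma}E_\Gamma(\gamma)^-\Bigr],\qquad \limsup_\gamma\varphi(x_\gamma)=\inf\varphi\Bigl[\bigcup_{\gamma\in\Gamma}E_\Gamma(\gamma)^+\Bigr].\]
By property (4), $\varphi$ preserves and reflects existing suprema and infima, so $\liminf_\gamma\varphi(x_\gamma)=\varphi(x)$ is equivalent to $\sup\bigcup_{\gamma\in\Gamma}E_\Gamma(\gamma)^-=x$ in $\pp$, and likewise $\limsup_\gamma\varphi(x_\gamma)=\varphi(x)$ is equivalent to $\inf\bigcup_{\gamma\in\Gamma}E_\Gamma(\gamma)^+=x$. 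Hence $x_\gamma\converges{O$^{\sss{\text{DM}}}$}x$ iff both identities hold, which by Proposition \ref{p1}(ii) is precisely $x_\gamma\converges{O$_3$}x$.

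\emph{Item (ii).} Since O$_1$-convergence implies O$_2$- implies O$_3$-convergence in every poset, and O$_3$-convergence is equivalent to O\textsuperscript{\tiny{DM}}-convergence by (i), it remains only to prove that in a lattice O$_3$-convergence implies O$_2$-convergence. Assume $x_\gamma\converges{O$_3$}x$ and put $M:=\bigcup_{\gamma\in\Gamma}E_\Gamma(\gamma)^-$ and $N:=\bigcup_{\gamma\in\Gamma}E_\Gamma(\gamma)^+$; by Proposition \ref{p1}(ii), $\vee M=\wedge N=x$, so both sets are nonempty. I claim $M$ is directed: given $s\in E_\Gamma(\gamma_s)^-$ and $t\in E_\Gamma(\gamma_t)^-$, choose $\gamma_0\in\Gamma$ with $\gamma_0\ge\gamma_s,\gamma_t$; then $E_\Gamma(\gamma_0)\subseteq E_\Gamma(\gamma_s)\cap E_\Gamma(\gamma_t)$, hence $s,t\in E_\Gamma(\gamma_0)^-$, and since $\pp$ is a lattice the element $s\vee t$ exists and, being the least upper bound of $\{s,t\}$ while every member of $E_\Gamma(\gamma_0)$ is an upper bound of $\{s,t\}$, it satisfies $s\vee t\le x_{\gamma'}$ for every $\gamma'\ge\gamma_0$; thus $s\vee t\in E_\Gamma(\gamma_0)^-\subseteq M$. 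Dually $N$ is filtered, and for any $(m,n)\in M\times N$, say $m\in E_\Gamma(\gamma_1)^-$ and $n\in E_\Gamma(\gamma_2)^+$, picking $\gamma_0\ge\gamma_1,\gamma_2$ gives $E_\Gamma(\gamma_0)\subseteq[m,n]$, i.e.\ the net is eventually in $[m,n]$. So $M$ and $N$ witness $x_\gamma\converges{O$_2$}x$, completing (ii).

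\emph{Main obstacle.} The difficulty is modest throughout. In (i) the only care needed is bookkeeping with the two polar maps $\aaa\mapsto\aaa^+$ and $\aaa\mapsto\aaa^-$ and the correct invocation of properties (4)--(6) of the Dedekind--MacNeille completion; nothing deep is involved. In (ii) the one place the lattice hypothesis is genuinely used is the directedness of $M=\bigcup_\gamma E_\Gamma(\gamma)^-$ (and dually of $N$), which rests on the existence of the binary join $s\vee t$; the second example in the paper shows that for general posets one really does have O$_3$-convergence without O$_2$-convergence, so this step cannot be circumvented.
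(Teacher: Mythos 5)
Your proof is correct and follows essentially the same route as the paper: both parts reduce to Proposition \ref{p1}, part (i) by identifying $\liminf_\gamma\varphi(x_\gamma)$ and $\limsup_\gamma\varphi(x_\gamma)$ with the polar constructions on the residuals (you invoke condition (ii) of that proposition where the paper uses condition (iii), an immaterial difference), and part (ii) by verifying that $\bigcup_{\gamma\in\Gamma}E_\Gamma(\gamma)^-$ is directed via binary joins and dually that $\bigcup_{\gamma\in\Gamma}E_\Gamma(\gamma)^+$ is filtered. No changes needed.
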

\begin{proof}
Let $(x_\gamma)_{\gamma\in\Gamma}$ be a net and $x$ and point in $\pp$.\\
{\rm(i)}~Observing that
\[\bigcap_{\gamma\in\Gamma}E_\Gamma(\gamma)^{+-}=\inf_{\gamma\in\Gamma}\sup\varphi[E_\Gamma(\gamma)]=\limsup_{\gamma}\varphi(x_\gamma)\]
and
\[\left(\bigcap_{\gamma\in\Gamma} E_\Gamma(\gamma)^{-+}\right)^-=\left(\bigcup_{\gamma\in\Gamma}E_\Gamma(\gamma)^-\right)^{+-}=\sup_{\gamma\in\Gamma}\inf\varphi[E_\Gamma(\gamma)]=\liminf_\gamma\varphi(x_\gamma),\]
one readily sees that the assertion follows by Proposition \ref{p1}.\\
{\rm(ii)}~Suffices to show that in a lattice O$_3$-convergence implies O$_2$-convergence.  In view of Proposition \ref{p1}, it is enough to show that $\bigcup_{\gamma\in\Gamma}E_\Gamma(\gamma)^-$ is directed and $\bigcup_{\gamma\in\Gamma}E_\Gamma(\gamma)^+$ is filtered.  We show that $\bigcup_{\gamma\in\Gamma}E_\Gamma(\gamma)^-$ is directed; the other assertion follows dually.  For $i=1,\dots,k$ suppose that $s_i\in E_\Gamma(\gamma_i)^-$. Then $\bigvee_{i=1}^k s_i\in E_\Gamma(\gamma)^-$ for every $\gamma$ satisfying $\gamma\ge \gamma_i$ for every $i=1,\dots,k$, i.e. $\bigvee_{i=1}^k s_i\in \bigcup_{\gamma\in\Gamma}E_\Gamma(\gamma)^-$.
\end{proof}

We recall that $(\pp,\le)$ is called \emph{monotone order separable} if every directed subset of $\pp$ with a supremum contains a sequence with the same supremum and, dually, every filtered subset  of $\pp$ having an infimum contains a  sequence with the same infimum.  In \cite{MathewsAnderson1967} it is proven that if a poset is monotone order separable, then a net that O$_2$-converges to a point is also O$_1$-convergent to the same point.

\begin{thm} In a monotone order separable poset, O$_2$-convergence implies O$_1$-convergence.
\end{thm}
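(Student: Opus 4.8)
The plan is to start from the definition of O$_2$-convergence, shrink the controlling directed and filtered sets down to \emph{monotone sequences} using monotone order separability, and then reindex those sequences over the original directed set $\Gamma$ so as to manufacture the two controlling nets required by O$_1$-convergence.

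Suppose $(x_\gamma)_{\gamma\in\Gamma}$ O$_2$-converges to $x$, witnessed by a directed $M\subset\pp$ and a filtered $N\subset\pp$ with $\vee M=\wedge N=x$ and such that for every $(m,n)\in M\times N$ the net is eventually contained in $[m,n]$. Since $\pp$ is monotone order separable, $M$ contains a sequence with supremum $x$ and $N$ contains a sequence with infimum $x$; exploiting that $M$ is directed and $N$ is filtered, I would then replace these, inductively, by an increasing sequence $(m_k)_{k\in\N}$ in $M$ and a decreasing sequence $(n_k)_{k\in\N}$ in $N$ still satisfying $\sup_k m_k=\inf_k n_k=x$ (note $m_k\le x\le n_k$ for each $k$). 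For every $k$ the pair $(m_k,n_k)$ lies in $M\times N$, so there is $\delta_k\in\Gamma$ with $x_\gamma\in[m_k,n_k]$ whenever $\gamma\ge\delta_k$; using that $\Gamma$ is directed, I would pass to an increasing sequence $(\gamma_k)_{k\in\N}$ in $\Gamma$ with $\gamma_k\ge\delta_k$, so that $x_\gamma\in[m_k,n_k]$ for all $\gamma\ge\gamma_k$.

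The reindexing step goes as follows. For $\gamma\in\Gamma$ put $K(\gamma):=\{k\in\N:\gamma\ge\gamma_k\}$; since $(\gamma_k)$ is increasing, $K(\gamma)$ is an initial segment of $\N$, possibly empty and possibly all of $\N$. Define $y_\gamma:=m_k$ and $z_\gamma:=n_k$ when $K(\gamma)=\{1,\dots,k\}$; set $y_\gamma:=m_1$, $z_\gamma:=n_1$ when $K(\gamma)=\emptyset$; and set $y_\gamma:=z_\gamma:=x$ when $K(\gamma)=\N$. If $\gamma\le\gamma'$ then $K(\gamma)\subseteq K(\gamma')$, so monotonicity of $(m_k)$, $(n_k)$ gives $y_\gamma\le y_{\gamma'}$ and $z_\gamma\ge z_{\gamma'}$ (the only extra case, $K(\gamma)=\emptyset\neq K(\gamma')$, being covered by $m_1\le m_k$, $n_1\ge n_k$); hence $(y_\gamma)_{\gamma\in\Gamma}$ is monotonic increasing and $(z_\gamma)_{\gamma\in\Gamma}$ monotonic decreasing. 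Since $y_\gamma\le x\le z_\gamma$ for all $\gamma$ while $y_{\gamma_k}\ge m_k$ and $z_{\gamma_k}\le n_k$, we get $\sup_\gamma y_\gamma=x=\inf_\gamma z_\gamma$, i.e.\ $y_\gamma\uparrow x$ and $z_\gamma\downarrow x$. Finally, for $\gamma\ge\gamma_1$ we have $K(\gamma)\neq\emptyset$: if $K(\gamma)=\{1,\dots,k\}$ then $\gamma\ge\gamma_k$ yields $x_\gamma\in[m_k,n_k]=[y_\gamma,z_\gamma]$, and if $K(\gamma)=\N$ then $x_\gamma\in\bigcap_k[m_k,n_k]$ forces $x_\gamma=x=y_\gamma=z_\gamma$; so $y_\gamma\le x_\gamma\le z_\gamma$ eventually, and $(x_\gamma)_{\gamma\in\Gamma}$ O$_1$-converges to $x$.

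The two inductive monotonisation steps and the case-check of monotonicity are routine. The one genuinely delicate point — the reason the argument is not a one-line reduction — is that the controlling nets in the definition of O$_1$-convergence must be indexed by the \emph{same} directed set $\Gamma$ and must be monotone \emph{on all of} $\Gamma$, not merely eventually; this is precisely why one must assign values $y_\gamma,z_\gamma$ to the indices $\gamma$ that dominate none of the $\gamma_k$ (handled by the constant values $m_1$, resp.\ $n_1$) and to those that dominate all of them (handled by noting $x_\gamma=x$ there, which is where $\sup_k m_k=\inf_k n_k=x$ is used).
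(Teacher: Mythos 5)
Your proof is correct and follows essentially the same route as the paper's: extract monotone sequences $(m_k)$, $(n_k)$ from $M$ and $N$, build an increasing sequence $(\gamma_k)$ in $\Gamma$, and reindex over $\Gamma$ according to which $\gamma_k$ a given index dominates (your initial segments $K(\gamma)$ are exactly the paper's partition into the sets $\Gamma_i$). The only cosmetic difference is that the paper dispatches the indices dominating every $\gamma_k$ as a separate ``eventually constant'' case, whereas you fold them into the construction by setting $y_\gamma=z_\gamma=x$ there; both handle the endpoint cases correctly.
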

\begin{proof}
Let $\mm$ and $\nn$ be subsets of $\pp$ satisfying $\sup \mm=\inf\nn=x$ and such that for every $(m,n)\in\mm\times\nn$ there exists $\gamma(m,n)$ satisfying $m\le x_\gamma\le n$ for every $\gamma\ge\gamma(m,n)$.  By the assumption, there is a monotonic increasing sequence $(m_i)_{i\in\N}$ in $\mm$, and a monotonic decreasing sequence $(n_i)_{i\in\N}$ in $\nn$, such that $\sup_{i\in\N}m_i=\inf_{i\in\N}n_i=x$.
Let $\gamma_1:=\gamma(m_1,n_1)$.  For $i\ge 2$ define inductively $\gamma_i$  by choosing an arbitrary $\gamma_i\in\Gamma$ satisfying $\gamma_i\ge \gamma_{i-1}$ and $\gamma_i\ge \gamma(m_i,n_i)$.  Let $\Gamma_0:=\set{\gamma}{\gamma\ngeq\gamma_1}$ and for $i\ge 1$ let $\Gamma_i:=\set{\gamma}{\gamma\ge\gamma_i,\ \gamma\ngeq\gamma_{i+1}}$.  If $\Gamma\mysetminus\bigcup_{i=0}^\infty\Gamma_i\neq\emptyset$, then $(x_\gamma)_{\gamma\in\Gamma}$ must be eventually constant equal to $x$, and therefore the assertion follows.  Suppose that $\set{\Gamma_i}{i\ge 0}$ is a partition of $\Gamma$ and $\set{\gamma}{\gamma\ge\gamma_i}=\bigcup_{k\ge i}\Gamma_k$.  Put $m_0$ and $n_0$ equal to any element of $\pp$ and define the nets $(m_\gamma)_{\gamma\in\Gamma}$ and $(n_\gamma)_{\gamma\in\Gamma}$ by setting $(m_\gamma,n_\gamma):=(m_i,n_i)$ if $\gamma\in\Gamma_i$.  Then, $(m_\gamma)_{\gamma\ge\gamma_1}$ is monotonic increasing to $x$, $(n_\gamma)_{\gamma\ge\gamma_1}$ is monotonic decreasing to $x$ and $m_\gamma\le x_\gamma\le n_\gamma$ for every $\gamma\ge\gamma_1$, i.e., eventually,  $m_\gamma\le x_\gamma\le n_\gamma$, $m_\gamma\uparrow x$ and $n_\gamma\downarrow x$.
\end{proof}

Let $i\in\{1,2,3\}$.  A subset $\xx$ of $\pp$ is said to be \emph{O$_i$-closed} if  there is no net in $\xx$ O$_i$-converging to a point outside of $\xx$.  The collection of O$_i$-closed sets comprises the closed sets for a topology, which we shall denote by $\tau_{\text{O$_i$}}(\pp)$.  Since O$_1$-convergence is `stronger' than O$_2$-convergence, and the later is yet `stronger' than O$_3$-convergence, it follows that \[\tau_{\text{O$_3$}}(\pp)\,\subset\,\tau_{\text{O$_2$}}(\pp)\,\subset\,\tau_{\text{O$_1$}}(\pp).\]
When $\pp$ is a complete lattice, all three topologies are equal; in this case we write $\tau_\text{O}(\pp)$.

\begin{thm}\label{t2}
Let $(\pp,\le)$ be a poset and let $\hat{\pp}$ denote its Dedekind-MacNeille completion.
\begin{enumerate}[{\rm(i)}]
\item $\tau_{\text{O$_1$}}(\pp)=\tau_{\text{O$_2$}}(\pp)$.
\item $\tau_{\text{O$_3$}}(\pp)\supset\tau_{\text{O}}(\hat{\pp})|_\pp$.
\item If $\pp$ is a lattice, $\tau_{\text{O$_1$}}(\pp)=\tau_{\text{O$_2$}}(\pp)=\tau_{\text{O$_3$}}(\pp)$.
\end{enumerate}
\end{thm}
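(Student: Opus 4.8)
The plan is to prove the three parts in turn, exploiting that the chain $\tau_{\text{O$_3$}}(\pp)\subset\tau_{\text{O$_2$}}(\pp)\subset\tau_{\text{O$_1$}}(\pp)$ is already established, so that each asserted equality (or inclusion) reduces to a comparison of closed-set families: for comparable topologies, $\tau\subset\tau'$ is equivalent to every $\tau$-closed set being $\tau'$-closed. I would keep in mind throughout that $\xx$ is O$_i$-closed exactly when it contains the limit of every O$_i$-convergent net lying in it, and that the implication O$_1\Rightarrow$O$_2\Rightarrow$O$_3$ at the level of convergence already gives $\tau_{\text{O$_3$}}(\pp)\subset\tau_{\text{O$_2$}}(\pp)\subset\tau_{\text{O$_1$}}(\pp)$. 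Thus for (i) it remains to prove the reverse inclusion $\tau_{\text{O$_1$}}(\pp)\subset\tau_{\text{O$_2$}}(\pp)$, i.e.\ that every O$_1$-closed set is O$_2$-closed.

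For this inclusion I would take an O$_1$-closed set $\xx$ and a net $(x_\gamma)_{\gamma\in\Gamma}$ in $\xx$ that O$_2$-converges to some $x$, and aim to show $x\in\xx$. By definition there are a directed $M\subset\pp$ and a filtered $N\subset\pp$ with $\vee M=\wedge N=x$ such that for each $(m,n)\in M\times N$ some $\gamma(m,n)\in\Gamma$ satisfies $x_\gamma\in[m,n]$ whenever $\gamma\ge\gamma(m,n)$. The crucial device is to re-index: set $\Delta:=\Gamma\times M\times N$, directed by the product order in which the $N$-coordinate carries the \emph{reverse} of the order of $\pp$ (so that $N$, being filtered, becomes upward directed). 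For $\delta=(\gamma_0,m,n)\in\Delta$ I would use directedness of $\Gamma$ to pick $\gamma\ge\gamma_0$ with $\gamma\ge\gamma(m,n)$ and set $w_\delta:=x_\gamma$, $y_\delta:=m$, $z_\delta:=n$. Then $w_\delta\in\xx$ for all $\delta$; the net $(y_\delta)$ is monotonic increasing with value set $M$, so $\sup\{y_\delta\}=\vee M=x$ and hence $y_\delta\uparrow x$; dually $z_\delta\downarrow x$; and $\gamma\ge\gamma(m,n)$ forces $x_\gamma\in[m,n]$, whence $y_\delta\le w_\delta\le z_\delta$ for every $\delta$. Therefore $(w_\delta)_{\delta\in\Delta}$ is an O$_1$-convergent net in $\xx$ with limit $x$, and O$_1$-closedness of $\xx$ yields $x\in\xx$.

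Parts (ii) and (iii) I expect to follow quickly from Theorem \ref{t1} together with the elementary fact that order convergence entails convergence in the order topology: if $u_\gamma\to u$ in the sense of O-convergence in the complete lattice $\hat\pp$ and $U$ is a $\tau_{\text{O}}(\hat\pp)$-open neighbourhood of $u$, then the net cannot be frequently in the O-closed set $\hat\pp\setminus U$, for otherwise its cofinal subnet there would O-converge to $u$ inside that closed set; hence eventually $u_\gamma\in U$. For (ii), the inclusion $\supset$ amounts to showing every set closed in the subspace topology is O$_3$-closed. Such a set is $F=\pp\cap C$ with $C$ closed in $\hat\pp$; given a net in $F$ that O$_3$-converges to $x$ in $\pp$, Theorem \ref{t1}(i) gives $\varphi(x_\gamma)\to\varphi(x)$ (O-convergence) in $\hat\pp$, hence topological convergence with all terms in the closed set $C$, so $\varphi(x)\in C$ and $x\in\pp\cap C=F$. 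For (iii), when $\pp$ is a lattice Theorem \ref{t1}(ii) makes O$_2$- and O$_3$-convergence the very same relation on nets, so the O$_2$-closed and O$_3$-closed sets coincide, giving $\tau_{\text{O$_2$}}(\pp)=\tau_{\text{O$_3$}}(\pp)$; combined with (i) this yields the triple equality.

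I expect the one genuinely substantive step to be the re-indexing construction in (i): one must verify that $\Delta$ is truly directed (which is what forces the reversal of the order on the $N$-coordinate) and that the auxiliary nets $(y_\delta)$ and $(z_\delta)$, now indexed by the enlarged directed set rather than by $M$ and $N$, still have supremum and infimum equal to $x$. Once this bookkeeping is in place the remaining content of the theorem is essentially a packaging of Theorem \ref{t1} with the observation that order convergence implies order-topology convergence, so I do not anticipate further delicate arguments in (ii) and (iii).
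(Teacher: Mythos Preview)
Your proof is correct and follows essentially the same strategy as the paper's. The only cosmetic difference is in part (i): the paper re-indexes over $M\times N$ alone (with the same reversed order on the $N$-coordinate) and takes the new net to be $(m,n)\mapsto x_{\gamma(m,n)}$, whereas you carry along an extra $\Gamma$-factor and make an additional choice of $\gamma\ge\gamma_0,\gamma(m,n)$; this is harmless but unnecessary, since $x_{\gamma(m,n)}\in[m,n]$ already holds.
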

\begin{proof}
{\rm(i)}~We need to show that every O$_1$-closed subset $\xx$ of $\pp$ is O$_2$-closed.  To this end, we suppose that $\xx\subset\pp$ and $(x_\gamma)_{\gamma\in\Gamma}$ is a net in $\xx$ that O$_2$-converges to $x\in\pp$.  Then there are subsets $\mm$ and $\nn$ of $\pp$, where $\mm$ is directed, $\nn$ is filtered, and satisfying that for every $(m,n)\in\mm\times\nn$ there exists $\gamma(m,n)\in\Gamma$ such that $x_\gamma\in[m,n]$ for every $\gamma\ge \gamma(m,n)$.  The product $\mm\times\nn$ becomes a directed set when equipped with the partial order $\preceq$  defined by
\[(m,n)\preceq (m',n')\ \Leftrightarrow\  m\le m'\,\text{ and }\, n'\le n.\]
The net $(m,n)\mapsto x_{\gamma(m,n)}:\mm\times\nn\to  \xx$ is O$_1$-convergent to $x$ because the net $(m,n)\mapsto m: \mm\times\nn\to\pp$ is monotonic increasing to $x$; the net $(m,n)\mapsto n:\mm\times\nn\to\pp$ is monotonic decreasing to $x$; and $x_{\gamma(m,n)}\in[m,n]$ for every $(m,n)\in\mm\times\nn$.     So, if $\xx$ is O$_1$-closed, then $\xx$ is O$_2$-closed.  This completes the proof of {\rm(i)}.

{\rm(ii)}~If $\xx\subset\pp$ is closed in $\pp$ w.r.t.  $\tau_{\text{O}}(\hat{\pp})$ and $(x_\gamma)_{\gamma\in\Gamma}$ is a net $\xx$ that O$_3$-converges to $x\in\pp$ then it is also O\textsuperscript{\tiny{DM}}-convergent to $x$ (by {\rm(i)} of Theorem \ref{t1}) and so $x\in \xx$.

{\rm(iii)}~This follows by {\rm(i)} and by {\rm(ii)} of Theorem \ref{t1}.
\end{proof}

In the following we make use of a construction by V.~ Olej\v cek \cite{Olejcek2000} to give an example that shows that the inclusion in {\rm(ii)} of Theorem \ref{t2} above may indeed be  proper.

\begin{exmp}
 Let $\Z':=\Z\mysetminus \{0\}$.  Endow $L:=\bigcup_{i\in\Z'}\{a(i),b(i)\}\cup\{0\}$ with the partial order $\le$ induced by:
\begin{itemize}
\item $a(i)\ge a(j)\ge 0$ and $b(i)\ge b(j)\ge 0$ for every $i,j\in\Z'$ satisfying $1\le i\le j$,
\item $a(i)\le a(j)\le 0$ and $b(i)\le b(j)\le 0$ for every $i,j\in\Z'$ satisfying $-1\ge i\ge j$,
\item $a(i)\le b(i)$ if $i\ge 1$ and $a(i)\ge b(i)$ if $i\le -1$.
\end{itemize}
It is easy to verify that $(L,\le)$ is a lattice satisfying
\[\limsup_{i\in\N}a(-i)=\limsup_{i\in\N}b(-i)=\liminf_{i\in\N}a(i)=\liminf_{i\in\N}b(i)=0.\]
  For every $k\in\N$ let $L_k$ be a copy of $L$.  Endow $\mathscr L':=\bigcup_{k\in\N} L_k\cup\{e\}$  with the  partial order $\preceq$ induced by:
\[ x\preceq y \ \Longleftarrow\  (\exists k\in\N)\ \begin{cases}
                             & (x,y\in L_k,\, x\le y) \mbox{ or} \\
                             & (x=a_k(-1),\  y=a_{k+1}(-1))\ \mbox{or}\\
                             & (x=a_{k+1}(1),\ y=a_{k}(1))\ \mbox{or}\\
                             & (x=a_k(-1),\ y=e)\ \mbox{or}\\
                             & (x=e,\ y=a_k(1)).
                            \end{cases}\]
                            One can verify that $\hat{\mathscr L}:=\mathscr L'\cup\{\mathbf{0},\mathbf{1}\}$ is a complete lattice.  Figure 1 shows the Hasse diagram of $\hat{\mathscr L}$.\footnote{For typographic reasons we write $ak{i}$ (resp. $bki$) instead of $a_k(i)$ (resp. $b_k(i)$) and $\bar{i}$ instead of $-i$.}
                            Define  $\bb:=\{b_k(i):k\in\N,\,i\in\Z'\}$ and $\oo:=\{0_k:k\in\N\}$. Then, $\mathscr L:=\hat{\mathscr L}\mysetminus \oo$ is a lattice and its Dedekind-MacNeille completion can be identified with $\hat{\mathscr L}$.  Being a lattice, one has $\tau_{\text{O$_1$}}(\mathscr L)=\tau_{\text{O$_2$}}(\mathscr L)=\tau_{\text{O$_3$}}(\mathscr L)$, so to prove our claim it is enough to show that $\tau_{\text{O$_1$}}(\mathscr L)\neq \tau_{\text{O}}(\hat{\mathscr L})|_{\mathscr L}$.  To this end, one needs to observe that in $\mathscr L$ every (nontrivial) monotonic net has to either be eventually in $L_k$  (and therefore converges to $0_k$) for some $k$ or be eventually in one of the sets $\{a_k(1):k\in\N\}$ or $\{a_k(-1):k\in\N\}$.  Hence, the set $\bb$ is closed w.r.t. $\tau_{\text{O$_1$}}(\mathscr L)$.  On the other-hand, every subset $\mathscr F\subset\hat{\mathscr L}$ that is closed w.r.t. $\tau_{\text{O}}(\hat{\mathscr L})$ and that contains $\bb$ must necessarily contain $\oo$ --  and therefore $e$ --  since $(0_k)_{k\in\N}$ O-converges to $e$.  This means that $\bb$ is not closed w.r.t. $\tau_{\text{O}}(\hat{\mathscr L})|_{\mathscr L}$.
\begin{figure}
\begin{center}
\psset{unit=0.95cm}
\begin{pspicture}(-8,-6)(8,6)

\dotnode(-8,6){I}
\uput[d](-8,6){${\scriptstyle\mathbf 1}$}

\dotnode(-8,-5.8){O}
\uput[d](O){${\scriptstyle\mathbf 0}$}

\dotnode(8,0){e}
\uput[u](8,0){${\scriptstyle e}$}

\dotnode(-8,4){{a11}}
\uput[u](-8,4){${\scriptstyle a11}$}
\dotnode(-7.9,3){{a12}}
\uput[l]({a12}){$\scriptstyle a12 $}
\dotnode(-7.75,2){{a13}}
\uput[l]({a13}){$\scriptstyle a13$}
\dotnode(-7.6,1.3){{a14}}
\uput[l](-7.6,1.2){$\scriptstyle a14$}
\dotnode(-6,-3.8){{a11*}}
\uput[l](-6,-3.7){$\scriptstyle a1\bar{1}$}
\dotnode(-6.1,-3){{a12*}}
\uput[l](-6.1,-2.85){$\scriptstyle a1\bar{2}$}
\dotnode(-6.2,-2.2){{a13*}}
\uput[l](-6.2,-2.1){$\scriptstyle a1\bar{3}$}
\dotnode(-6.4,-1.5){{a14*}}
\uput[l](-6.45,-1.4){$\scriptstyle a1\bar{4}$}
\dotnode(-6,4.6){{b11}}
\uput[r](-6,4.6){$\scriptstyle b11$}
\dotnode(-6,3.8){{b12}}
\uput[r](-6,3.8){$\scriptstyle b12$}
\dotnode(-6.1,2.7){{b13}}
\uput[r](-6.1,2.7){$\scriptstyle b13$}
\dotnode(-6.3,1.8){{b14}}
\uput[r](-6.3,1.8){$\scriptstyle b14$}
\dotnode(-8,-4.8){{b11*}}
\uput[l](-8,-4.8){$\scriptstyle b1\bar{1}$}
\dotnode(-8,-3.7){{b12*}}
\uput[l](-8,-3.7){$\scriptstyle b1\bar{2}$}
\dotnode(-7.9,-2.8){{b13*}}
\uput[l](-7.9,-2.8){$\scriptstyle b1\bar{3}$}
\dotnode(-7.7,-2){{b14*}}
\uput[l](-7.7,-2){$\scriptstyle b1\bar{4}$}
\dotnode(-7,0){{O1}}
\uput[r](-7,0){$\scriptstyle 0_1$}
\pscurve[showpoints=true]{-}({a11})({a12})({a13})({a14})
\pscurve[showpoints=true,linestyle=dashed]{-}({a14})({O1})({a14*})
\pscurve[showpoints=true]{-}({a14*})({a13*})({a12*})({a11*})
\pscurve[showpoints=true]{-}({b11})({b12})({b13})({b14})
\pscurve[showpoints=true, linestyle=dashed]{-}({b14})({O1})({b14*})
\pscurve[showpoints=true]{-}({b11*})({b12*})({b13*})({b14*})
\psline({a11})({b11})
\psline({a12})({b12})
\psline({a13})({b13})
\psline({a14})({b14})
\psline({a11*})({b11*})
\psline({a12*})({b12*})
\psline({a13*})({b13*})
\psline({a14*})({b14*})

\dotnode(-5,3.2){{a21}}
\uput[u](-5,3.2){$\scriptstyle a21$}
\dotnode(-4.9,2.4){{a22}}
\uput[l](-4.9,2.3){$\scriptstyle a22$}
\dotnode(-4.7,1.7){{a23}}
\uput[l](-4.7,1.68){$\scriptstyle a23$}
\dotnode(-4.4,1){{a24}}
\uput[l](-4.4,0.9){$\scriptstyle a24$}
\dotnode(-3,-3){{a21*}}
\uput[r](-3.05,-2.65){$\scriptstyle a2\bar{1}$}
\dotnode(-3.2,-2){{a22*}}
\uput[r](-3.2,-1.9){$\scriptstyle a2\bar{2}$}
\dotnode(-3.4,-1.4){{a23*}}
\uput[r](-3.4,-1.3){$\scriptstyle a2\bar{3}$}
\dotnode(-3.6,-0.8){{a24*}}
\uput[r](-3.6,-0.7){$\scriptstyle a2\bar{4}$}
\dotnode(-4,0){{O2}}
\uput[r](-4,0){$\scriptstyle 0_2$}
\dotnode(-2.8,3.8){{b21}}
\uput[r](-2.8,3.68){$\scriptstyle b21$}
\dotnode(-2.9,3.05){{b22}}
\uput[r](-2.9,3.05){$\scriptstyle b22$}
\dotnode(-3.1,2.2){{b23}}
\uput[r](-3.1,2.2){$\scriptstyle b23$}
\dotnode(-3.4,1.3){{b24}}
\uput[r](-3.4,1.3){$\scriptstyle b24$}
\dotnode(-5,-4){{b21*}}
\uput[l](-5,-3.9){$\scriptstyle b2\bar{1}$}
\dotnode(-4.9,-2.5){{b22*}}
\uput[l](-4.9,-2.5){$\scriptstyle b2\bar{2} $}
\dotnode(-4.8,-1.8){{b23*}}
\uput[l](-4.8,-1.8){$\scriptstyle b2\bar{3}$}
\dotnode(-4.5,-1){{b24*}}
\uput[l](-4.5,-1){$\scriptstyle b2\bar{4}$}
\pscurve[showpoints=true]{-}({a21})({a22})({a23})({a24})
\pscurve[showpoints=true,linestyle=dashed]{-}({a24})({O2})({a24*})
\pscurve[showpoints=true]{-}({a24*})({a23*})({a22*})({a21*})
\pscurve[showpoints=true]{-}({b21})({b22})({b23})({b24})
\pscurve[showpoints=true, linestyle=dashed]{-}({b24})({O2})({b24*})
\pscurve[showpoints=true]{-}({b21*})({b22*})({b23*})({b24*})
\psline({a21})({b21})
\psline({a22})({b22})
\psline({a23})({b23})
\psline({a24})({b24})
\psline({a21*})({b21*})
\psline({a22*})({b22*})
\psline({a23*})({b23*})
\psline({a24*})({b24*})

\dotnode(-2,2.6){{a32}}
\uput[u](-1.8,2.6){$\scriptstyle a31$}
\dotnode(-1.8,1.4){{a33}}
\uput[l](-1.8,1.4){$\scriptstyle a32$}
\dotnode(-1.6,0.7){{a34}}
\uput[l](-1.6,0.7){$\scriptstyle a33$}
\dotnode(0,-2.2){{a32*}}
\uput[r](0,-2.3){$\scriptstyle a3\bar{1}$}
\dotnode(-0.1,-1.5){{a33*}}
\uput[r](-0.1,-1.5){$\scriptstyle a3\bar{2}$}
\dotnode(-0.4,-0.8){{a34*}}
\uput[r](-0.4,-0.8){$\scriptstyle a3\bar{3}$}
\dotnode(-1,0){{O3}}
\uput[r](-1,0){$\scriptstyle 0_3$}

\dotnode(0,3){{b32}}
\uput[r](0,3){$\scriptstyle b31$}
\dotnode(-0.1,1.8){{b33}}
\uput[r](-0.1,1.7){$\scriptstyle b32$}
\dotnode(-0.3,1){{b34}}
\uput[r](-0.3,1){$\scriptstyle b33$}
\dotnode(-2.1,-3.2){{b32*}}
\uput[r](-2.1,-3.3){$\scriptstyle b3\bar{1}$}
\dotnode(-2,-2){{b33*}}
\uput[r](-2.05,-2.15){$\scriptstyle b3\bar{2}$}
\dotnode(-1.7,-1.1){{b34*}}
\uput[r](-1.8,-1.25){$\scriptstyle b3\bar{3}$}

\pscurve[showpoints=true]{-}({a32})({a33})({a34})
\pscurve[showpoints=true,linestyle=dashed]{-}({a34})({O3})({a34*})
\pscurve[showpoints=true]{-}({a34*})({a33*})({a32*})
\pscurve[showpoints=true]{-}({b32})({b33})({b34})
\pscurve[showpoints=true, linestyle=dashed]{-}({b34})({O3})({b34*})
\pscurve[showpoints=true]{-}({b32*})({b33*})({b34*})
\psline({a31})({b31})
\psline({a32})({b32})
\psline({a33})({b33})
\psline({a34})({b34})
\psline({a32*})({b32*})
\psline({a33*})({b33*})
\psline({a34*})({b34*})

\psline(O)({b11*})
\psline(O)({b21*})
\psline(O)({b32*})

\psline(I)({b11})
\psline(I)({b21})
\psline(I)({b32})

\pscurve({a11*})({a21*})({a32*})
\psline[linestyle=dashed]({a32*})(e)
\pscurve({a11})({a21})({a32})
\psline[linestyle=dashed]({a32})(e)

\end{pspicture}
\end{center}
{\tiny Figure 1}

\end{figure}

\end{exmp}

It is well-known that order convergence fails to be topological, in general; not even when $\pp$ is a complete lattice.   Characterizations of  posets having the property that  O$_2$-order convergence is topological  can be found in \cite{WangZhao2013,SunLiGuo2016,SunLi2018}.  For lattices, other characterizations have been thoroughly studied (in the language of filters) in \cite{Kent1964,Gingras1976,Erne1980,ErneRiecan1995}.  On the whole,  these characterizations relate to the requirement that the poset has `strong' distributivity properties.  For arbitrary posets, it is not possible to extract from a net that converges w.r.t.  $\tau_{\text{O}_i}(\pp)$ a subnet that is O$_i$-order convergent.  There is, however, one advantageous situation which has proved to be useful in \cite{BuChWe2012}.  Let us illustrate this by  proving a generalization.  For a regular cardinal $\lambda$, we shall call a function $f:\lambda\to \pp$ a \emph{ $\lambda$-sequence} in $\pp$. For any $i\in\{1,2,3\}$ we shall say that the subset $\xx$ of $\pp$ is O$^\lambda _i$-closed if there is no $\lambda$-sequence in $\xx$ that O$_i$-converges to a point outside of $\xx$.  The collection of all O$^\lambda_i$-closed sets comprises the closed sets for a topology, which we shall denote by $\tau^\lambda_{\text{O$_i$}}(\pp)$. The following inclusions are easy to verify.  (For two topologies $\tau$ and $\tau'$ on the same set let $\tau\rightarrow \tau'$ mean that $\tau$ is finer than $\tau'$.)

\[
\begin{array}{ccccc}
  \tau^\lambda_{\text{O$_3$}}(\pp) & \leftarrow & \tau^\lambda_{\text{O$_2$}}(\pp) & \leftarrow & \tau^\lambda_{\text{O$_1$}}(\pp)\\
  \downarrow &  & \downarrow &  &\downarrow \\
\tau_{\text{O$_3$}}(\pp) & \leftarrow & \tau_{\text{O$_2$}}(\pp) & = & \tau_{\text{O$_1$}}(\pp)
\end{array}
\]

\begin{lem}\cite[Claim 4.31.1, pg. 69]{Schimmerling2011}
Let $h:\lambda\to\lambda$ be a cofinal function.  Then there exists $L\subset\lambda$ such that the restriction $h|_L$ is cofinal and isotone.
\end{lem}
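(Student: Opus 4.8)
The plan is to produce $L$ as the range of a strictly increasing $\lambda$-sequence $\langle \ell_\xi : \xi<\lambda\rangle$ built by transfinite recursion, arranging three things simultaneously: that the indices $\ell_\xi$ are strictly increasing (so that the native well-order of $L$ coincides with the enumeration), that the values $h(\ell_\xi)$ are strictly increasing (this will yield isotonicity of $h|_L$), and that $h(\ell_\xi)\ge\xi$ (this will yield cofinality of $h[L]$ in $\lambda$). Thus essentially I want to extract a ``monotone cofinal subsequence'' of $h$ and take its domain as $L$.

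First I would isolate the only nontrivial ingredient, which is also the one place where regularity of $\lambda$ is genuinely needed: for every $\delta<\lambda$ the set $U_\delta:=\{\gamma<\lambda : h(\gamma)>\delta\}$ is cofinal in $\lambda$. For if $U_\delta$ were bounded by some $\beta_0<\lambda$, then $h(\gamma)\le\delta$ for all $\gamma\in[\beta_0,\lambda)$; since $\lambda$ is regular and $|\beta_0|<\lambda$ we also have $\sup h[\beta_0]<\lambda$, and so $\sup_{\gamma<\lambda}h(\gamma)\le\max(\sup h[\beta_0],\delta)<\lambda$, contradicting that $h$ is cofinal. Note cofinality of $h$ by itself only guarantees that $U_\delta\neq\emptyset$; upgrading this to unboundedness is exactly what will let the recursion go through.

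Then I would run the recursion. Assume $\langle \ell_\eta : \eta<\xi\rangle$ is already defined and put $\delta_\xi:=\sup\bigl(\{\xi\}\cup\{\ell_\eta+1:\eta<\xi\}\cup\{h(\ell_\eta)+1:\eta<\xi\}\bigr)$; this is a supremum of fewer than $\lambda$ ordinals, each $<\lambda$, hence $\delta_\xi<\lambda$ by regularity. By the previous paragraph $U_{\delta_\xi}$ is cofinal, so I may choose $\ell_\xi\in U_{\delta_\xi}$ with $\ell_\xi\ge\delta_\xi$. Then for every $\eta<\xi$ one has $\ell_\xi\ge\delta_\xi>\ell_\eta$ and $h(\ell_\xi)>\delta_\xi>h(\ell_\eta)$, and moreover $h(\ell_\xi)>\delta_\xi\ge\xi$. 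Set $L:=\{\ell_\xi:\xi<\lambda\}$.

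Finally I would read off the three conclusions. Strict monotonicity of $\langle \ell_\xi\rangle$ makes $\xi\mapsto\ell_\xi$ an order isomorphism of $(\lambda,\le)$ onto $(L,\le)$; hence if $\alpha=\ell_\eta$ and $\beta=\ell_\zeta$ lie in $L$ with $\alpha\le\beta$ then $\eta\le\zeta$, so by strict monotonicity of $\langle h(\ell_\xi)\rangle$ we get $h(\alpha)=h(\ell_\eta)\le h(\ell_\zeta)=h(\beta)$, i.e. $h|_L$ is isotone. Since $h(\ell_\xi)\ge\xi$ for all $\xi<\lambda$, $\sup_{\xi<\lambda}h(\ell_\xi)=\lambda$, so $h[L]$ is cofinal in $\lambda$; likewise $\ell_\xi\ge\xi$ shows $L$ is cofinal in $\lambda$. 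The single point demanding care throughout is that at each stage one needs one ordinal that is at once large in the domain (to dominate $\sup_{\eta<\xi}\ell_\eta$) and has large value (to dominate $\sup_{\eta<\xi}h(\ell_\eta)$ and $\xi$), and this is precisely what the cofinality of $U_{\delta_\xi}$ delivers.
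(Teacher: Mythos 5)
Your argument is correct and complete. The paper itself gives no proof of this lemma --- it is quoted directly from Schimmerling --- and your transfinite recursion (making the indices $\ell_\xi$, the values $h(\ell_\xi)$, and the lower bounds $h(\ell_\xi)\ge\xi$ all increase simultaneously, with the cofinality of the sets $U_\delta$ as the engine) is exactly the standard argument behind the cited claim. One small remark: the lemma as printed omits the hypothesis that $\lambda$ is regular, which is assumed in the surrounding text and in Schimmerling's statement; you correctly identify the two places (unboundedness of $U_\delta$ and the bound $\delta_\xi<\lambda$) where regularity is genuinely used, so your write-up actually makes the implicit hypothesis explicit.
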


\begin{thm}  Let $\lambda$ be a regular cardinal and let  $f:\lambda\to \pp$ be a $\lambda$-sequence in a poset $\pp$ convergent  to $x\in\pp$ w.r.t. $\tau^\lambda_{\text{O$_i$}}(\pp)$ (where $i\in\{1,2,3\}$). Then, there exists an isotone and cofinal function $\hat h:\lambda\to\lambda$ such that the $\lambda$-sequence $f\circ\hat h$ is O$_{i}$-convergent to $x$.
\end{thm}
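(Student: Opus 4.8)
The plan is to reduce, by two elementary manoeuvres, to producing a \emph{cofinal} function $h\colon\lambda\to\lambda$ with $f\circ h$ being O$_i$-convergent to $x$, and then to build such an $h$ by a transfinite recursion of length $\lambda$ in which the regularity of $\lambda$ is used at every step. First, if $\{\xi<\lambda:f(\xi)=x\}$ is cofinal its increasing enumeration already serves as $h$; otherwise, replacing $f$ by a tail on which the value $x$ is never taken (a cofinal subnet, hence still $\tau^\lambda_{\text{O$_i$}}(\pp)$-convergent to $x$) we may assume $f$ never equals $x$. With this assumption, any $h$ with $f\circ h$ O$_i$-convergent to $x$ is automatically cofinal: a bounded $h$ would make $f\circ h$ take fewer than $\lambda$ values, so by regularity some value $v\neq x$ would be attained cofinally, and the resulting eventually-constant cofinal subnet would force $v=x$ by uniqueness of the O$_i$-limit (the Remark after Proposition~\ref{p1}). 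So the task is just to find \emph{some} $\lambda$-sequence with values among the $f(\xi)$ that is O$_i$-convergent to $x$; and the Lemma, applied at the end, upgrades any such cofinal $h$ to a cofinal \emph{isotone} one, via the increasing enumeration $\psi\colon\lambda\to L$ of the set $L$ it produces, which is an order isomorphism because a cofinal subset of the regular cardinal $\lambda$ has order type $\lambda$; then $h\circ\psi$ is isotone, cofinal, and $f\circ(h\circ\psi)$ is the $L$-indexed cofinal subnet of $f\circ h$ up to reindexing, hence still O$_i$-convergent to $x$. Note also that $\pp\setminus\{x\}$ cannot be $\tau^\lambda_{\text{O$_i$}}(\pp)$-closed, for then $f$ — being convergent to $x\notin\pp\setminus\{x\}$ — would eventually equal $x$, against our assumption; thus there is a nontrivial O$_i$-convergent $\lambda$-sequence to $x$, which in particular furnishes the squeezing ingredients (an increasing $\lambda$-sequence with supremum $x$, a decreasing one with infimum $x$).

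The construction rests on the reformulation that $f\to x$ with respect to $\tau^\lambda_{\text{O$_i$}}(\pp)$ if and only if $x$ lies in the $\tau^\lambda_{\text{O$_i$}}(\pp)$-closure of $f[S]$ for \emph{every} cofinal $S\subseteq\lambda$ (if some open $U\ni x$ had $f$ frequently outside it, then $\pp\setminus U$ would be a closed set missing $x$ but containing $f[S]$ for the cofinal $S=\{\xi:f(\xi)\notin U\}$). One then defines $h(\xi)$ together with elements $m_\xi\le x\le n_\xi$ by recursion on $\xi<\lambda$: having chosen $h$ below $\xi$, set $\eta_\xi:=\sup_{\zeta<\xi}(h(\zeta)+1)$, which is $<\lambda$ by regularity, so $f|_{[\eta_\xi,\lambda)}$ is still a cofinal subnet with $x$ in its closure; use this to pick $h(\xi)\ge\eta_\xi$ (making $h$ strictly increasing, hence cofinal) and to update $m_\xi,n_\xi$ so that $(m_\xi)$ increases, $(n_\xi)$ decreases, $m_\xi\le f(h(\xi))\le n_\xi$, and — in the limit — $\sup_\xi m_\xi=x=\inf_\xi n_\xi$. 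The last point makes $f\circ h$ O$_1$-convergent, hence O$_2$- and O$_3$-convergent, to $x$, so a single $h$ works for all three values of $i$. (When $i=3$ one may instead shortcut via the Dedekind--MacNeille completion: $\tau^\lambda_{\text{O$_3$}}(\pp)$ refines $\tau^\lambda_{\text{O}}(\hat\pp)|_\pp$ by the argument of Theorem~\ref{t2}(ii) together with Theorem~\ref{t1}(i), so $\varphi\circ f$ converges in the complete lattice $\hat\pp$ and one extracts an O-convergent cofinal reindexing there, which pulls back to an O$_3$-convergent one in $\pp$.)

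I expect the crux — and the one genuinely delicate part — to be the passage from \emph{topological} convergence to \emph{order} convergence inside the recursion, i.e.\ guaranteeing that the approximants $m_\xi$ and $n_\xi$ can be chosen with supremum and infimum \emph{exactly} $x$. Membership of $x$ in the $\tau^\lambda_{\text{O$_i$}}(\pp)$-closure of a tail only tells us that each $\tau^\lambda_{\text{O$_i$}}(\pp)$-open neighbourhood of $x$ is met frequently, and the order topology is in general neither generated by order intervals nor $\lambda$-sequential, so a closed interval around $x$ need not be a neighbourhood and this information does not by itself yield an order squeeze. The resolution is a diagonalization along $\lambda$ that exploits the full strength of the hypothesis — that it holds for \emph{every} cofinal reindexing of $f$, not merely for $f$: one shows that, writing $A$ for the range of $f$, the transfinite iteration of the operator ``adjoin all O$_i$-limits of $\lambda$-sequences'' reaches $x$ already at the first stage, that is, $x$ is itself the O$_i$-limit of some $\lambda$-sequence drawn from $A$; in carrying this out, at each of the $\lambda$ steps one must choose controlling elements compatible with those chosen earlier, and it is here that regularity of $\lambda$ (to keep the partial data indexed below $\lambda$ and to guarantee cofinality of the result) and the Lemma (to restore monotonicity of an auxiliary cofinal selection) are needed. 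Granting this, the reductions of the first paragraph deliver the cofinal isotone $\hat h$.
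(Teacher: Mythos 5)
Your opening reductions are sound and essentially coincide with the paper's: discard the case where $f^{-1}\{x\}$ is cofinal; observe that a bounded reindexing would produce a constant cofinal subnet with value $v\neq x$, contradicting uniqueness of O$_i$-limits; and invoke the Lemma at the very end to turn a cofinal $h$ into an isotone cofinal $\hat h$ whose composite with $f$ is a cofinal subnet of an O$_i$-convergent $\lambda$-sequence. The genuine gap is that the central step --- producing a $\lambda$-sequence \emph{with values in the range of $f$} that O$_i$-converges --- is never carried out. You reduce everything to the claim that $x$ is itself the O$_i$-limit of some $\lambda$-sequence drawn from $A=f[\lambda]$, you correctly observe that membership of $x$ in the closure of tails only gives neighbourhood information and does not furnish the order-theoretic squeezing data $m_\xi\uparrow x$, $n_\xi\downarrow x$, and then you write that ``one shows'' the sequential-closure operator reaches $x$ at the first stage. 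That is exactly the assertion requiring proof; your recursion supplies no mechanism for choosing the $m_\xi$ and $n_\xi$, and there is no general reason why the first stage of the closure iteration applied to $A$ should contain the particular point $x$. (The parenthetical detour through the Dedekind--MacNeille completion for $i=3$ runs into the same wall: even in a complete lattice, convergence in the order topology does not by itself allow extraction of order-convergent subnets.)

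The idea you are missing is to apply the definition of $\tau^\lambda_{\text{O$_i$}}(\pp)$-closedness to the range $\xx:=f[\lambda]$ itself rather than to $\pp\mysetminus\{x\}$. You did note that $\pp\mysetminus\{x\}$ is not closed, but the resulting O$_i$-convergent $\lambda$-sequence may take values anywhere outside $\{x\}$ and so is useless for defining a reindexing of $f$. Since $f$ converges topologically to $x\notin\xx$ (after your reduction), the set $\xx$ is not $\tau^\lambda_{\text{O$_i$}}(\pp)$-closed, and by the very definition of this topology there exists a $\lambda$-sequence $g:\lambda\to\xx$ that O$_i$-converges to a point outside $\xx$ --- no transfinite construction of controlling elements and no passage from topological to order convergence is needed. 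One then sets $h(\alpha):=\min\{\beta<\lambda : f(\beta)=g(\alpha)\}$, proves that $h[\lambda]$ is cofinal by the regularity argument you already have in hand (each fibre $g^{-1}\{t\}$ is bounded, and a bounded $h[\lambda]$ would cover $\lambda$ by fewer than $\lambda$ bounded fibres), and concludes with the Lemma exactly as you propose. This is the paper's argument; your scaffolding is compatible with it, but the engine that drives it is absent from your write-up.
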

\begin{proof}
We can suppose that $f^{-1}\{x\}$ is bounded in $\lambda$; if not the assertion follows trivially.  We can therefore assume that $f(\alpha)\neq x$ for every $\alpha<\lambda$, i.e. the range $\xx$ of $f$ is not closed w.r.t. $\tau^\lambda_{\text{O$_i$}}(\pp)$.  By definition of $\tau^\lambda_{\text{O$_i$}}(\pp)$  there exists a $\lambda$-sequence $g:\lambda\to\xx$ that O$_i$-converges outside of $\xx$.   For every $t\in\xx$ the set $G_t:=g^{-1}\{t\}$ is bounded in $\lambda$; otherwise, $G_t$ is cofinal in $\lambda$ for some $t\in\xx$ and this  contradicts the assumption that $g$ is O$_i$-convergent to a point outside of $\xx$.   For every $\alpha<\lambda$ let $h(\alpha):=\min\set{\beta<\lambda}{f(\beta)=g(\alpha)}$. Note that $f(h(\alpha))=g(\alpha)$ for every $\alpha<\lambda$.

We show that $h[\lambda]$ is cofinal in $\lambda$.  Suppose, for contradiction, that there exists $\gamma<\lambda$ satisfying $h[\lambda]< \gamma$.  For every $\alpha<\gamma$ the set $H_\alpha:=h^{-1}\{\alpha\}$ is bounded in $\lambda$ because $H_\alpha\subset G_{f(\alpha)}$ and the latter is bounded in $\lambda$.  For every $\alpha<\gamma$ define $s(\alpha):=\min\set{\beta<\lambda}{H_\alpha<\beta}$.  In view of the regularity of $\lambda$ we have $\sup s[\gamma]< \lambda$ and therefore
\[\lambda=|\lambda|=\left|\bigcup_{\alpha<\gamma}H_\alpha\right|\le |\gamma|\,|\sup s[\gamma]|<\lambda,\]
which is a contradiction.  Therefore   $\sup h[\lambda]=\lambda$.

By the lemma there exists $L\subset \lambda$ such that $h|_L$ is cofinal and isotone.  Let $i:\lambda\to L$ be an order isomorphism and let $\hat{h}:=h\circ i$.  Then $f\circ \hat{h}$ is a $\lambda$-sequence in $\xx$ and
$f\circ\hat{h}(\alpha)=f(h(i(\alpha)))=g(i(\alpha))$, for every $\alpha<\lambda$.  Thus, $f\circ\hat{h}$ is O$_i$-convergent to $x$.
\end{proof}

In particular, when we set $\lambda=\omega$ we recover the following observation.  (This is well known for O$_1$ convergence; see \cite[Propoposition 2]{BuChWe2012}.)

\begin{cor}
If $(x_n)_{n\in\N}$ is a sequence in $\pp$ converging w.r.t. $\tau^\omega_{\text{O$_i$}}(\pp)$ to   some point $x$,  there exists a subsequence $(x_{n_i})_{i\in\N}$ that O$_i$-converges to $x$.
\end{cor}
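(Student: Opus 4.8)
The plan is to deduce this directly from the preceding Theorem by specializing to $\lambda=\omega$, and then to repackage the isotone cofinal reparametrization it produces as a genuine subsequence.

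First, since $\omega=\aleph_0$ is a regular cardinal, a sequence $(x_n)_{n\in\N}$ is precisely an $\omega$-sequence $f\colon\omega\to\pp$ in the sense used above, and $\tau^\omega_{\text{O$_i$}}(\pp)$-convergence is exactly $\tau^\lambda_{\text{O$_i$}}(\pp)$-convergence for $\lambda=\omega$. Applying the Theorem yields an isotone and cofinal function $\hat h\colon\omega\to\omega$ such that $f\circ\hat h$, i.e. the $\omega$-sequence $k\mapsto x_{\hat h(k)}$, is O$_i$-convergent to $x$.

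Second, I would convert $\hat h$ into a strictly increasing sequence of indices. Because $\hat h$ is isotone, each fibre $\hat h^{-1}\{m\}$ is an interval of $\omega$; it is finite, since otherwise it would be cofinal in $\omega$ and $\hat h$ would be bounded by $m$, contradicting the cofinality of $\hat h$. Hence the image $\hat h[\omega]$ is an infinite subset of $\N$; let $n_0<n_1<n_2<\dots$ enumerate it in increasing order. Setting $\Gamma':=\set{k\in\omega}{\hat h(k)<\hat h(k+1)}$, the set $\Gamma'$ is cofinal in $\omega$, and the restricted net $k\mapsto x_{\hat h(k)}$, $k\in\Gamma'$, equals — after composing with the order isomorphism $\N\to\Gamma'$ — precisely the subsequence $(x_{n_i})_{i\in\N}$.

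Finally, I would invoke the fact noted earlier that every cofinal subnet of an O$_i$-convergent net is O$_i$-convergent to the same limit: since $(x_{\hat h(k)})_{k\in\omega}$ O$_i$-converges to $x$ and $(x_{n_i})_{i\in\N}$ is (a reindexing of) a cofinal subnet of it, $(x_{n_i})_{i\in\N}$ O$_i$-converges to $x$, as required. There is no real obstacle here; the only step needing a line of care is the finiteness of the fibres of $\hat h$, which is exactly what allows the isotone cofinal reparametrization to be replaced by an honest strictly increasing index sequence.
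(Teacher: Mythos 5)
Your proposal is correct and follows the paper's own route: the corollary is obtained simply by specializing the preceding theorem to the regular cardinal $\lambda=\omega$. The only addition is your careful conversion of the isotone cofinal map $\hat h$ into a strictly increasing index sequence via the finiteness of its fibres and the stability of O$_i$-convergence under cofinal subnets, a routine detail the paper leaves implicit.
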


\section{Applications}

In \cite{ChHaWe}, the order topology $\tau_{\text{O}_1}(\saM)$ associated with the self-adjoint part $\saM$ of a von Neumann algebra $M$ was studied and compared with the locally convex topologies that arise from the duality between $M$ and its predual.  Among other things, it was shown that on the self-adjoint part of the unit ball of a $\sigma$-finite von Neumann algebra, the restrictions of the order topology and the $\sigma$-strong topology are equal.    The arguments relied heavily on the assumption that the algebra is $\sigma$-finite.   In this section we shall apply Theorem \ref{t2} to obtain a partial improvement  to this.  First we shall illustrate the commutative case  to emphasize that even in this case, the non $\sigma$-finite case is not fully understood yet.

\subsection{The order topology on $L^\infty$.}   Let $(X,\Sigma,\mu)$ be a semi-finite measure space and let $L^\infty$ denote the Banach algebra of essentially bounded real-valued functions.
Other useful topologies on $L^\infty$, besides the topology $\tau_\infty$ induced by the norm $\norm{\infty}{ }$,  are:

\begin{enumerate}[{\rm(i)}]
  \item \emph{the topology of convergence in measure $\tau_\mu$}: Every $E\in\Sigma$ satisfying $\mu(E)<\infty$ defines an F-seminorm $\rho_E:f\mapsto \integral{|f|\wedge\chi_E}$ on $L^\infty$.  The Hausdorff and linear                                                                                                                                                                topology induced by the family $\set{\rho_E}{E\in\Sigma,\,\mu(E)<\infty}$ is the topology of convergence in measure (on sets of finite measure) and is denoted by $\tau_\mu$.
  \item \emph{the strong-operator topology $\sigma_p$ where $1\le p<\infty$}:  Every $g\in L^\infty$ induces a bounded linear operator on the Banach space $L^p$ of $p$-integrable real-valued functions defined by $f\mapsto fg$. In this way $L^\infty$ embeds isometrically (as a Banach algebra) in the space of bounded linear operators $B(L^p)$ and can therefore be endowed with the subspace topology induced by the strong-operator topology of $B(L^p)$ generated by the family of order continuous Riesz seminorms $f\mapsto \norm{p}{fg}$  as $g$ ranges in $L^p$.   Let $\sigma_p$ denote this locally convex-solid topology.
\item \emph{the weak topology $\sigma(L^\infty,L^1)$ and the Mackey topology $\tau(L^\infty,L^1)$, respectively}: The bilinear form $L^\infty\times L^1\to\R$ defined by $\langle f,g\rangle\mapsto\integral{fg}$ induces a duality and $L^\infty$ embeds isometrically (as a Banach space) in the dual space of $L^1$.  (We recall that this embedding is onto if and only if $(X,\Sigma,\mu)$ is localisable.)  Among the locally convex topologies that are consistent with this duality, the weak topology $\sigma(L^\infty,L^1)$ is the coarsest and the Mackey topology $\tau(L^\infty,L^1)$ is the finest.  $\sigma(L^\infty,L^1)$ is the topology of pointwise convergence on $L^1$ and $\tau(L^\infty,L^1)$ is the topology of uniform convergence on all absolutely convex $\sigma(L^\infty,L^1)$-compact subsets of $L^1$,  or equivalently, on all absolutely convex relatively $\sigma(L^\infty,L^1)$-compact subsets of $L^1$.
\end{enumerate}

With the partial order induced by the pointwise  order, $L^\infty$ is a Riesz space. Therefore the three modes of order convergence described in the previous section give rise to the same order topology $\tau_{\text{O}}(L^\infty)$.  It is easy to see that the topologies $\tau_\mu$,  $\sigma(L^\infty,L^1)$ and $\sigma_p$ are \emph{order-continuous topologies}  (i.e. coarser than  $\tau_{\text{O}}(L^\infty)$).

We shall need the following characterization of the subsets of $L^1$ that are relatively compact w.r.t. $\sigma(L^1,L^\infty)$.  Some care is required because under our assumptions $L^\infty$ may well be a proper subspace of the Banach dual $M:=(L^1)^\ast$ and in this case the weak topology $\sigma(L^1,M)$ is strictly finer than $\sigma(L^1,L^\infty)$.  Let {$\ca{\Sigma}$} be the space of all $\sigma$-additive $\R$-valued measures on $\Sigma$ with the variation-norm (equivalent to the sup-norm). To every $f\in \mathcal L^1$ we associate the $\R$-valued measure $\mu_f\in\ca{\Sigma}$ defined by {$\mu_f(E):=\rintegral{E}{f}$}.  The map $\Phi:f\mapsto \mu_f$ is an isometric embedding of $L^1$ into $\ca{\Sigma}$.
We denote by $\tau_p$ the product topology on $\ca{\Sigma}\subset \R^\Sigma$.

\begin{prop}\label{p2}
For $A\subset L^1$ the following conditions are equivalent:
\begin{enumerate}[{\rm(i)}]
\item  $A$ is relatively compact w.r.t. the weak topology $\sigma(L^1,M)$;
\item  $A$ is relatively compact w.r.t. $\sigma(L^1,L^\infty)$;
\item  $\Phi[A]$ is relatively compact in $\ca{\Sigma}$ w.r.t. $\tau_p$;
\item $\Phi[A]$ is uniformly exhaustive and pointwise bounded;
\item $A$ is uniformly integrable;
\item If $(y_\gamma)_{\gamma\in\Gamma}$ is a net in $L^\infty$ satisfying $y_\gamma\downarrow 0$, then
\[\lim_\gamma \sup_{u\in U}|\langle u,y_\gamma\rangle|=0.\]
\end{enumerate}
\end{prop}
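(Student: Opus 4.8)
The plan is to prove the six conditions equivalent through a cycle that routes the ``$L^1$-side'' statements (i), (ii), (v), (vi) through the intrinsic topology of setwise convergence on $\ca{\Sigma}$, where the classical boundedness and convergence theorems of measure theory apply with no $\sigma$-finiteness restriction. Concretely, I would establish
\[
\text{(i)}\Rightarrow\text{(ii)}\Rightarrow\text{(iii)}\Leftrightarrow\text{(iv)}\Leftrightarrow\text{(v)}\Leftrightarrow\text{(vi)}\qquad\text{together with}\qquad\text{(iv)}\Rightarrow\text{(i)}.
\]

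First, (i)$\Rightarrow$(ii) is immediate, since $L^\infty$ sits isometrically inside $M=(L^1)^\ast$ and hence $\sigma(L^1,L^\infty)\subseteq\sigma(L^1,M)$. For (ii)$\Rightarrow$(iii), observe that the $E$-coordinate of $\Phi(f)$ is $\mu_f(E)=\langle f,\chi_E\rangle$ with $\chi_E\in L^\infty$, so $\Phi\colon(L^1,\sigma(L^1,L^\infty))\to(\ca{\Sigma},\tau_p)$ is continuous; therefore the $\Phi$-image of the (compact) $\sigma(L^1,L^\infty)$-closure of $A$ is a $\tau_p$-compact --- hence $\tau_p$-closed --- subset of $\ca{\Sigma}$ containing $\Phi[A]$, which is (iii). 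The equivalence (iii)$\Leftrightarrow$(iv) is the classical description of relative compactness in $\ca{\Sigma}$ for setwise convergence: a subfamily of $\ca{\Sigma}$ is relatively $\tau_p$-compact precisely when it is uniformly exhaustive and bounded in total variation, and by Nikodym's boundedness theorem (available because $\Sigma$ is a $\sigma$-algebra) variation-boundedness coincides with pointwise boundedness. If one prefers to argue directly: pointwise boundedness of $\Phi[A]$ is automatic from compactness; a failure of uniform exhaustivity would yield a disjoint sequence $(E_n)$ and $f_n\in A$ with $|\mu_{f_n}(E_n)|\ge\delta>0$, and then a $\tau_p$-cluster point of $(\mu_{f_n})$ inside $\ca{\Sigma}$ would violate countable additivity (the Vitali--Hahn--Saks / Rosenthal-lemma mechanism); conversely, uniform exhaustivity forces every $\tau_p$-limit of a net from $\Phi[A]$ to remain countably additive, so that together with variation-boundedness the $\tau_p$-closure of $\Phi[A]$ is a compact subset of $\ca{\Sigma}$.

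For (iv)$\Rightarrow$(v): variation-boundedness is precisely $\sup_{f\in A}\|f\|_1<\infty$, and a routine exhaustion argument --- disjointifying candidate sets with the aid of the individual absolute continuity of finitely many previously chosen $\mu_{f_i}$, or a single appeal to Rosenthal's lemma --- turns uniform exhaustivity into both the uniform-absolute-continuity clause and the tightness clause of uniform integrability; the converse (v)$\Rightarrow$(iv) is elementary, splitting $E_n=(E_n\cap E_0)\cup(E_n\setminus E_0)$ against a tightness set $E_0$ and using $\sum_n\mu(E_n\cap E_0)\le\mu(E_0)<\infty$. Next, (v)$\Rightarrow$(vi): if $y_\gamma\downarrow 0$ in $L^\infty$ then, for any $E_0$ of finite measure, $y_\gamma\wedge\chi_{E_0}\downarrow 0$ in $L^1$ as well, and since the $L^1$-norm is order continuous this gives $\|y_\gamma\wedge\chi_{E_0}\|_1\to 0$, hence $y_\gamma\to 0$ in measure on $E_0$; choosing $E_0$ to absorb $A$ up to $\varepsilon$ (the tightness clause) and applying the uniform form of Vitali's convergence theorem on $E_0$ yields $\sup_{f\in A}|\langle f,y_\gamma\rangle|\to 0$. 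Finally, (vi)$\Rightarrow$(iv): applying (vi) to the sequences $\tfrac{1}{n}z\downarrow 0$ for a fixed $z\in L^\infty_+$ shows $\sup_{f\in A}|\langle f,z\rangle|<\infty$ for every $z\in L^\infty$, so that $A$ is $\sigma(L^1,L^\infty)$-bounded and hence, since $L^1$ embeds isometrically into $(L^\infty)^\ast$, norm-bounded by Banach--Steinhaus; and applying (vi) to $\chi_{\bigcup_{k\ge N}E_k}\downarrow 0$ for a disjoint sequence $(E_n)$ gives $\eta_N:=\sup_{f\in A}|\langle f,\chi_{\bigcup_{k\ge N}E_k}\rangle|\to 0$, whence $\sup_{f\in A}|\mu_f(E_n)|\le\eta_n+\eta_{n+1}\to 0$, i.e.\ $\Phi[A]$ is uniformly exhaustive.

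It remains to close the cycle with (iv)$\Rightarrow$(i): a norm-bounded $A\subseteq L^1(\mu)$ whose image $\Phi[A]$ is uniformly countably additive is relatively weakly compact --- this is the substantial direction of the Dunford--Pettis theorem, which is valid for an arbitrary measure space once the tightness clause is incorporated, and which can equally be read off from (iii) via the $L^1$-form of the Bartle--Dunford--Schwartz theorem. The real obstacle throughout --- and the reason for the detour via $\ca{\Sigma}$ --- is the possible failure of localisability: $L^\infty$ can be a proper (weak-$\ast$ dense) subspace of $M$, so a $\sigma(L^1,L^\infty)$-convergent net in $L^1$ need not have an $L^1$-representable limit, and relative $\sigma(L^1,L^\infty)$-compactness is a priori strictly weaker than relative weak compactness. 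Reformulating everything in terms of the setwise topology on $\ca{\Sigma}$ removes this ambiguity, because there Nikodym's boundedness theorem and the Vitali--Hahn--Saks circle hold with no countability hypothesis on $\mu$. A smaller but genuine point is that the $L^\infty$-infimum of a (possibly uncountable) decreasing net need not be its pointwise infimum, so the passage from $y_\gamma\downarrow 0$ to convergence in measure really does require the order continuity of $\|\cdot\|_1$ and not merely a pointwise argument.
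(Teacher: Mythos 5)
Your argument is correct and follows essentially the same route as the paper's: the same cycle (i)$\Rightarrow$(ii)$\Rightarrow$(iii)$\Rightarrow$(iv)$\Rightarrow$(v)$\Rightarrow$(i) with (vi) attached, passing through the setwise topology $\tau_p$ on $\ca{\Sigma}$ and closing via the uniform-integrability characterisation of relative weak compactness in $L^1$. The only difference is presentational: the paper simply cites G\"anssler for (iii)$\Rightarrow$(iv) and Fremlin for (iv)$\Rightarrow$(v) and (i)$\Leftrightarrow$(v)$\Leftrightarrow$(vi), whereas you unpack those citations into sketches of the underlying Nikod\'ym/Vitali--Hahn--Saks and Dunford--Pettis mechanisms.
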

 \begin{proof}
The implications {\rm(i)}$\Rightarrow${\rm(ii)}$\Rightarrow${\rm(iii)} are trivial.
{\rm(iii)}$\Rightarrow${\rm(iv)} follows from \cite[Theorem 2.6]{Ganssler1971}.
The implication {\rm(iv)}$\Rightarrow${\rm(v)} and the  equivalence of {\rm(i)}, {\rm(v)} and {\rm(vi)} can explicitly be found in \cite[246G, pg. 187 \& 247C, pg. 193 \& 246Y(k), pg191]{FremlinVol2}.
 \end{proof}

 By Proposition \ref{p2}  the notions of relative compactness in the weak topologies $\sigma(L^1,L^\infty)$ and $\sigma(L^1,M)$  are equivalent.  This has the following interesting consequence:

 \begin{cor}\label{c0}
  $\tau(L^\infty,L^1) =\tau(M,L^1)|_{L^\infty}$.
 \end{cor}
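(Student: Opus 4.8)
The plan is to show that $\tau(L^\infty,L^1)$ and $\tau(M,L^1)|_{L^\infty}$ are defined by exactly the same family of seminorms. Recall that the Mackey topology $\tau(L^\infty,L^1)$ is the topology on $L^\infty$ of uniform convergence on the members of the family $\mathcal K_\infty$ of all absolutely convex, relatively $\sigma(L^1,L^\infty)$-compact subsets of $L^1$; likewise, since $M=(L^1)^\ast$ and hence $\sigma(L^1,M)$ is the weak topology of the Banach space $L^1$, the Mackey topology $\tau(M,L^1)$ is the topology on $M$ of uniform convergence on the members of the family $\mathcal K_M$ of all absolutely convex, relatively $\sigma(L^1,M)$-compact subsets of $L^1$. (In either case one may pass freely between ``compact'' and ``relatively compact'', since an absolutely convex relatively compact set and its closure have the same polar and hence determine the same seminorm.)

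First I would dispose of the trivial inclusion $\tau(M,L^1)|_{L^\infty}\subseteq\tau(L^\infty,L^1)$: since $L^\infty\subseteq M$, the topology $\sigma(L^1,L^\infty)$ is coarser than $\sigma(L^1,M)$, so every $K\in\mathcal K_M$ is relatively $\sigma(L^1,L^\infty)$-compact, i.e. $\mathcal K_M\subseteq\mathcal K_\infty$; moreover, for $f\in L^\infty$ the value $\sup_{u\in K}|\langle u,f\rangle|$ does not depend on whether $f$ is regarded as an element of $L^\infty$ or of $M$. Hence each defining seminorm of $\tau(M,L^1)|_{L^\infty}$ is also a defining seminorm of $\tau(L^\infty,L^1)$.

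The substance of the statement lies in the reverse inclusion, and here I would invoke Proposition \ref{p2}: by the equivalence of {\rm(i)} and {\rm(ii)} there, a subset of $L^1$ is relatively $\sigma(L^1,L^\infty)$-compact if and only if it is relatively $\sigma(L^1,M)$-compact. Consequently $\mathcal K_\infty=\mathcal K_M$, so the two families of polar seminorms defining $\tau(L^\infty,L^1)$ and $\tau(M,L^1)|_{L^\infty}$ literally coincide, and the two topologies are equal.

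The only genuine obstacle is precisely the step ``relatively $\sigma(L^1,L^\infty)$-compact $\Rightarrow$ relatively $\sigma(L^1,M)$-compact'', which is not automatic for a subspace of a Banach dual but holds here because both properties are equivalent to uniform integrability of the set in $L^1$ — and that equivalence is the content of Proposition \ref{p2}. Everything else is routine bookkeeping with Mackey topologies.
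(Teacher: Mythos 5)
Your proof is correct and follows exactly the route the paper intends: the corollary is stated as an immediate consequence of the equivalence (i)$\Leftrightarrow$(ii) of Proposition \ref{p2}, which shows that the families of absolutely convex relatively $\sigma(L^1,L^\infty)$-compact and relatively $\sigma(L^1,M)$-compact subsets of $L^1$ coincide, so the two Mackey topologies are generated by the same polar seminorms on $L^\infty$. Your additional remarks on the trivial inclusion and on passing between compact and relatively compact sets are accurate bookkeeping that the paper leaves implicit.
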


The equivalence  (ii)$\Leftrightarrow$(v) of Proposition \ref{p2} tells us that the family of all subsets of $L^1$ that are relatively compact  w.r.t. $\sigma(L^1,L^\infty)$  has  wide-ranging stability properties:
If $A$ is a uniformly integrable subset of $L^1$, then obviously its solid hull $s(A):=\{f\in L^1: |f|\leq |g| \text{ for some } g\in A\}$ is uniformly integrable. Therefore,  the convex hull $(s(A))$ is uniformly integrable. Since $(s(A))$ is absolutely convex, it follows with Proposition \ref{p2} that the absolutely convex solid hull of a relatively $\sigma(L^1,L^\infty)$-compact subset of $L^1$ is again  relatively $\sigma(L^1,L^\infty)$-compact.

 \begin{prop}\label{c1}
 \begin{enumerate}[{\rm(i)}]
  \item $\tau(L^\infty,L^1)$ is a locally convex-solid and order-continuous topology.
  \item $\tau(L^\infty,L^1)$ is the finest Hausdorff locally convex and order-continuous topology on $L^\infty$.
  \end{enumerate}
 \end{prop}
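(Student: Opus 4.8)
The plan is to treat the two assertions separately, the second resting on identifying the topological dual of an arbitrary competitor topology.

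\emph{Part (i).} Recall that $\tau(L^\infty,L^1)$ is generated by the seminorms $p_K\colon f\mapsto\sup_{u\in K}|\langle f,u\rangle|$, where $K$ ranges over the absolutely convex, relatively $\sigma(L^1,L^\infty)$-compact subsets of $L^1$. I would first note that, as observed immediately before the proposition, the absolutely convex solid hull $\widetilde K$ of such a $K$ is again relatively $\sigma(L^1,L^\infty)$-compact; since $p_K\le p_{\widetilde K}=p_{\overline{\widetilde K}}$, and $\overline{\widetilde K}$ is absolutely convex and $\sigma(L^1,L^\infty)$-compact, the seminorm $p_{\widetilde K}$ is $\tau(L^\infty,L^1)$-continuous, so the family $\{p_{\widetilde K}\}_K$ generates the same topology. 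For a solid absolutely convex $K$ one checks directly that $p_K(f)=\sup\{\langle|f|,u\rangle:u\in K,\ u\ge 0\}$ — using $|\langle f,u\rangle|\le\langle|f|,|u|\rangle$ and $\langle|f|,u\rangle=\langle f,(\sgn f)u\rangle$, both $|u|$ and $(\sgn f)u$ lying in $K$ by solidity — so each $p_{\widetilde K}$ is a Riesz seminorm and $\tau(L^\infty,L^1)$ is locally convex-solid. For order continuity I would use the standard fact that a locally convex-solid topology $\tau$ on $L^\infty$ satisfies $\tau\subseteq\tau_{\text{O}}(L^\infty)$ as soon as $y_\gamma\downarrow 0$ implies $y_\gamma\to0$ in $\tau$: given an O-convergent net with $y_\gamma\le x_\gamma\le z_\gamma$ eventually, $y_\gamma\uparrow x$, $z_\gamma\downarrow x$, squeeze $0\le x_\gamma-y_\gamma\le z_\gamma-y_\gamma\downarrow 0$ and use solidity together with $x-y_\gamma\downarrow 0$. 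Finally, $y_\gamma\downarrow 0\Rightarrow p_K(y_\gamma)\to 0$ is precisely condition (vi) of Proposition \ref{p2} applied to $A=K$, so $\tau(L^\infty,L^1)$ is order-continuous.

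\emph{Part (ii), reduction.} Let $\tau$ be any Hausdorff locally convex order-continuous topology on $L^\infty$ and put $E:=(L^\infty,\tau)^\ast$. The crux is to prove $E\subseteq L^1$ inside $(L^\infty)^\ast$. Granting this, the Mackey--Arens theorem gives $\tau\subseteq\tau(L^\infty,E)$ (here Hausdorffness is used, so that $\langle L^\infty,E\rangle$ is a dual pair); and $\tau(L^\infty,E)\subseteq\tau(L^\infty,L^1)$, because $\sigma(E,L^\infty)$ is the restriction of $\sigma(L^1,L^\infty)$ to $E$, so every absolutely convex $\sigma(E,L^\infty)$-compact subset of $E$ is an absolutely convex $\sigma(L^1,L^\infty)$-compact subset of $L^1$ and hence defines a $\tau(L^\infty,L^1)$-continuous seminorm. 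Thus $\tau\subseteq\tau(L^\infty,L^1)$. Since $\tau(L^\infty,L^1)$ is itself Hausdorff ($L^1$ separates the points of $L^\infty$ because $\mu$ is semi-finite), locally convex, and order-continuous by part (i), this identifies it as the finest such topology.

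\emph{Part (ii), the step $E\subseteq L^1$.} Fix $\phi\in E$; being $\tau_{\text{O}}(L^\infty)$-continuous, $\phi$ is \emph{order bounded}: otherwise it would be unbounded on some order interval $[0,u]$, and choosing $x_n\in[0,u]$ with $|\phi(x_n)|\ge 4^n$, the vectors $w_n:=2^{-n}x_n$ satisfy $-2^{-n}u\le w_n\le 2^{-n}u$, so $(w_n)$ O-converges to $0$ while $|\phi(w_n)|\ge 2^n$, a contradiction. It is also \emph{order continuous}: $x_\alpha\downarrow 0$ forces $(x_\alpha)$ to O-converge to $0$, hence $\phi(x_\alpha)\to0$. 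Thus $\phi$ lies in the order-continuous dual of $L^\infty$, which — the decisive point — is canonically identified with $L^1$ even when $\mu$ is merely semi-finite: passing to the positive part (the order-continuous dual is a band in the order dual), to $\phi\ge0$ one attaches the finite measure $\nu\colon F\mapsto\phi(\chi_F)$ on $\Sigma$ with $\nu\ll\mu$, and order continuity together with semi-finiteness yields $\phi(\chi_B)=\sup\{\phi(\chi_{B'}):B'\subseteq B,\ \mu(B')<\infty\}$ for every $B\in\Sigma$, so that the Radon--Nikod\'ym densities produced on finite-measure pieces patch to a single $g\in L^1_+$ with $\phi=\langle\,\cdot\,,g\rangle$. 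I expect this last identification to be the only genuinely non-routine point: under $\sigma$-finiteness it is immediate from the classical Radon--Nikod\'ym theorem, but in the semi-finite case one must exploit semi-finiteness to reduce to finite-measure pieces and to verify that the patched density is globally integrable. Everything else — the seminorm bookkeeping in (i) and the Mackey--Arens reduction in (ii) — is formal.
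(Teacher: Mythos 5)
Your proposal is correct and follows essentially the same route as the paper: for (i) both arguments rest on the stability of relative $\sigma(L^1,L^\infty)$-compactness under absolutely convex solid hulls together with condition (vi) of Proposition~\ref{p2}, and for (ii) both identify the dual of an arbitrary Hausdorff locally convex order-continuous topology with a subspace of $L^1$ via the set function $E\mapsto\varphi(\chi_E)$, semi-finiteness, and Radon--Nikod\'ym, before concluding with the Mackey comparison. The only cosmetic difference is that you first reduce to positive functionals through the band structure of the order-continuous dual, whereas the paper works directly with the signed measure $\nu$ and establishes its $\sigma$-finiteness with respect to $\mu$ by the same maximal-disjoint-system argument you sketch.
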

 \begin{proof}
{\rm(i)} It follows from the proceeding considerations that $\tau(L^\infty,L^1)$ is the topology of uniform convergence on all absolutely convex, solid, relatively $\sigma(L^1,L^\infty)$-compact subsets of $L^1$. This easily implies that the space $(L^\infty,\tau(L^\infty,L^1))$ has a $0$-neighbourhood of absolutely convex  solid sets, i.e.  $\tau(L^\infty,L^1)$ is a locally convex-solid topology.  From (ii)$\Rightarrow$(vi) of Proposition \ref{p2} it follows that $\tau(L^\infty,L^1)$ is order-continuous, i.e.  $\tau(L^\infty,L^1)$ is a Lebesgue topology in the terminology of \cite{AliBurk1978}.

 \rm{(ii)} Let $\tau$ be a Hausdorff locally convex and order-continuous topology on $L^\infty$.  First we show that every  linear functional $\varphi$ on $L^\infty$ that is continuous w.r.t. $\tau$ is also continuous w.r.t. $\sigma(L^\infty,L^1)$.
By our assumptions it follows that the set function $\nu:\Sigma\to\R$ defined by $\nu(E):=\varphi(\chi_E)$ is $\sigma$-additive.  We would like to apply the Radon-Nikod\'ym Theorem.  Clearly, $\nu$ is absolutely continuous w.r.t. $\mu$.  Let us verify that $\nu$ is $\sigma$-finite w.r.t. $\mu$.    The assumption of semi-finiteness implies that for every $E\in\Sigma$,  the directed set $\{\chi_F:F\in\Sigma,F\subset E,\mu(F)<+\infty\}$ increases to $\chi_E$.  Therefore the net $\{\nu(F):F\in\Sigma,F\subset E, \mu(F)<+\infty\}$ converges to $\nu(E)$, by the order-continuity of $\varphi$.   Every disjoint system $\dd$ in $\Sigma$ satisfying $\nu(D)\ne 0$ is countable; so if $\dd$ is a maximal disjoint system in $\Sigma$ such that $\mu(D)<+\infty$ and $\nu(D)\neq 0$ for every $D\in\dd$, then $\dd$ is countable.  Note that $\nu(E)=0$ for every $E\subset X\mysetminus \cup \dd$, i.e. $\nu$ is $\sigma$-finite w.r.t. $\mu$.
The Radon-Nikod\'ym Theorem can therefore be applied to yield a function $f\in L^1$ satisfying $\integral{\chi_E f}=\nu(E)$ for every $E\in\Sigma$.  But for every $g\in L^\infty_+$ (i.e. $g(x)\ge 0$ \muaew)  there is a sequence $(s_n)_{n\in\N}$ in $S:=\spn\{\chi_E:E\in\Sigma\}$ such that $s_n\uparrow g$.  This implies that  $\varphi(g)=\integral{gf}$ for every $g\in L^\infty_+$ and therefore $\varphi(g)=\integral{gf}$ for every $g\in L^\infty$.     In particular,  $\varphi$ is continuous w.r.t. $\sigma(L^\infty,L^1)$ and therefore the dual of $(L^\infty,\tau)$ can be identified with a  subspace $G$ of $L^1$.  If $K\subset G$ is compact w.r.t. $\sigma(G,L^\infty)$, it is also compact w.r.t. $\sigma(L^1,L^\infty)$; so $\tau\subset\tau(L^\infty, G)\subset \tau(L^\infty, L^1)$.

 \end{proof}

\begin{thm}\label{t4}  Let $(X,\Sigma,\mu)$ be a semi-finite measure space.
\begin{enumerate}[{\rm(i)}]
\item For every $1\le p<q<\infty$
\begin{align*}
&\sigma(L^\infty,L^1)\,\subset\, \sigma_p\,\subset\,\sigma_q\,\subset\,\tau(L^\infty,L^1)\,\subset\,\tau_{\infty},\ \text{and}\\
&\tau_\mu\,\subset\,\sigma_p,
\end{align*}
and  -- unless $L^\infty$ is finite-dimensional -- all of these inclusions are proper.
\item $\tau(L^\infty,L^1)\,\subset\,\tau_{\text{O}}(L^\infty)\,\subset\,\tau_{\infty}$ and $\tau_{\text{O}}(L^\infty)=\tau_{\infty}$ if and only if $L^\infty$ is finite-dimensional.
\item  If $(X,\Sigma,\mu)$ is $\sigma$-finite, the restrictions of $\tau_\mu$ and $\tau_{\text{O}}(L^\infty)$ to bounded parts of $L^\infty$ are equal.
\end{enumerate}
\end{thm}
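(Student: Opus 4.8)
The three parts can be attacked separately; the inclusions are mostly elementary, so the effort should go into the properness assertions of (i) and into the reverse inclusion in (iii).

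\emph{Part (i), inclusions.} Each step reduces to a Hölder or duality estimate that I would simply record. Factoring every $\varphi\in L^1$ as a product $gh$ with $g\in L^p$, $h\in L^{p'}$ gives $|\langle f,\varphi\rangle|\le\|fg\|_p\|h\|_{p'}$, hence $\sigma(L^\infty,L^1)\subset\sigma_p$. For $p<q$ and $g\in L^p$, putting $h:=|g|^{p/q}\sgn g\in L^q$ and applying Hölder's inequality on the finite measure $|g|^p\,{\rm d}\mu$ yields $\|fg\|_p\le\|g\|_p^{1-p/q}\|fh\|_q$, so $\sigma_p\subset\sigma_q$. For $\sigma_q\subset\tau(L^\infty,L^1)$ I would use $\|fg\|_q=\sup\{|\langle f,gh\rangle|:\|h\|_{q'}\le1\}$ together with the fact that $\{gh:\|h\|_{q'}\le1\}$ is uniformly integrable (absolute continuity of $E\mapsto\int_E|g|^q$), hence relatively $\sigma(L^1,L^\infty)$-compact by Proposition~\ref{p2}. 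That $\tau(L^\infty,L^1)\subset\tau_\infty$ follows because $\sigma(L^1,L^\infty)$-compact sets are norm bounded, and $\tau_\mu\subset\sigma_1\subset\sigma_p$ because $\rho_E(f)\le\|f\chi_E\|_1$.

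\emph{Part (i), properness.} Assuming $\dim L^\infty=\infty$ I would fix, via semi-finiteness, a disjoint sequence $(E_n)$ with $0<\mu(E_n)<\infty$ (when $\dim L^\infty<\infty$ all the topologies in sight coincide, which accounts for the exceptional clause). Two reductions organize the examples: a $\sigma(L^\infty,L^1)$-continuous seminorm must vanish on a finite-codimensional subspace, and a $\sigma_p$- (resp.\ $\sigma_q$-) continuous seminorm is dominated by a single multiplication seminorm $f\mapsto\|fG\|_p$ with $G\in L^p$ (resp.\ $L^q$), got by combining $G=(\sum|g_i|^p)^{1/p}$. Then: $f\mapsto\|fg\|_p$ with $g:=\sum_n2^{-n}\mu(E_n)^{-1/p}\chi_{E_n}\in L^p$ vanishes on no finite-codimensional subspace, so $\sigma(L^\infty,L^1)\subsetneq\sigma_p$; taking $h:=\sum_n\beta_n\mu(E_n)^{-1/q}\chi_{E_n}\in L^q$ with $(\beta_n)\in\ell^q\setminus\ell^p$ and evaluating $f\mapsto\|fh\|_q$ on the $\chi_{E_n}$ shows it is not dominated by any $\|fG\|_p$, so $\sigma_p\subsetneq\sigma_q$; choosing $a_n$ with $\|a_n\chi_{E_n}\|_1=n^{-1/q}$, the norm-null sequence $\varphi_n:=a_n\chi_{E_n}$ has weakly compact closed absolutely convex hull, so $f\mapsto\sup_n|\langle f,\varphi_n\rangle|$ is $\tau(L^\infty,L^1)$-continuous but (again by testing on $\chi_{E_n}$) not $\sigma_q$-continuous, so $\sigma_q\subsetneq\tau(L^\infty,L^1)$; the Banach-limit functional $f\mapsto\operatorname{LIM}_n\bigl(\mu(E_n)^{-1}\int_{E_n}f\bigr)$ is norm continuous but not represented by an element of $L^1$, so the duals differ and $\tau(L^\infty,L^1)\subsetneq\tau_\infty$; finally, for a suitable $g\in L^p$ every basic $\tau_\mu$-neighbourhood of $0$ contains the spikes $t\chi_{E_n}$ for all $t>0$ and all large $n$, on which $\|fg\|_p\to\infty$, so $\{\|fg\|_p<1\}$ is not a $\tau_\mu$-neighbourhood of $0$ and $\tau_\mu\subsetneq\sigma_p$.

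\emph{Part (ii).} Here $\tau(L^\infty,L^1)\subset\tau_{\text{O}}(L^\infty)$ is precisely the order-continuity of $\tau(L^\infty,L^1)$ proved in Proposition~\ref{c1}. For $\tau_{\text{O}}(L^\infty)\subset\tau_\infty$ I would exploit the order unit: if $C$ is order-closed and $f_n\in C$ with $\|f_n-f\|_\infty\to0$, a subsequence with $\|f_{n_k}-f\|_\infty\le2^{-k}$ satisfies $f-2^{-k}\mathbf1\le f_{n_k}\le f+2^{-k}\mathbf1$, whence it O$_1$-converges to $f$, so $f\in C$ and $C$ is norm closed. For the equivalence: if $\dim L^\infty<\infty$ then $\tau_{\text{O}}(L^\infty)$ is just the Euclidean topology $=\tau_\infty$; if $\dim L^\infty=\infty$, the functions $f_n:=2\chi_{E_n}$ satisfy $0\le f_n\le2\chi_{S_n}$ with $S_n:=\bigcup_{m\ge n}E_m$ and $2\chi_{S_n}\downarrow0$, so $(f_n)$ O$_1$-converges to $0$ while $\|f_n\|_\infty=2$; thus no $\tau_{\text{O}}$-neighbourhood of $0$ is norm bounded, and $\tau_{\text{O}}(L^\infty)\neq\tau_\infty$.

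\emph{Part (iii), and the main obstacle.} One inclusion is free: $\tau_\mu$ is order-continuous, so $\tau_\mu|_B\subset\tau_{\text{O}}(L^\infty)|_B$ on every order-bounded $B$. For the converse I would show that for $C$ order-closed the set $C\cap B$ is $\tau_\mu$-closed in $B$; since $\sigma$-finiteness makes $\tau_\mu|_B$ metrizable, it suffices to take a sequence $f_n\in C\cap B$ with $f_n\to f$ in measure, pass (using $\sigma$-finiteness again) to a subsequence $f_{n_k}\to f$ $\mu$-a.e., and observe that $g_k:=\sup_{j\ge k}|f_{n_j}-f|\downarrow0$ in $L^\infty$, so $f-g_k\le f_{n_k}\le f+g_k$ exhibits $(f_{n_k})$ as O$_1$-convergent to $f$; as $C$ is O$_1$-closed, $f\in C$. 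The routine part of the whole theorem is the chain of inclusions; the real work is feeding all five properness arguments from the single disjoint family $(E_n)$ together with the seminorm-domination reductions, and being careful in (iii) that $\sigma$-finiteness genuinely allows the passage to sequences and then to an a.e.-convergent subsequence that produces the O$_1$-squeeze.
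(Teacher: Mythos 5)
Your proof is correct, and parts (ii) and (iii) coincide with the paper's arguments: order-continuity of $\tau(L^\infty,L^1)$ from Proposition \ref{c1}, the order-unit squeeze $f-\varepsilon_n\mathds 1\le f_n\le f+\varepsilon_n\mathds 1$ for $\tau_{\text{O}}(L^\infty)\subset\tau_\infty$, and for (iii) the passage (via metrizability of $\tau_\mu$ and an a.e.-convergent subsequence) to the squeeze $f-g_k\le f_{n_k}\le f+g_k$ with $g_k=\sup_{j\ge k}|f_{n_j}-f|\downarrow 0$ -- this is exactly the argument the paper leaves implicit. Where you genuinely diverge is in part (i). For $\sigma_p\subset\sigma_q$ you apply H\"older on the finite measure $|g|^p\,{\rm d}\mu$, which avoids the paper's splitting of $X$ into $\{|f|>1\}$ and $\{|f|\le 1\}$ and is arguably cleaner; for $\sigma_q\subset\tau(L^\infty,L^1)$ you argue directly that $\{gh:\Vert h\Vert_{q'}\le 1\}$ is uniformly integrable, whereas the paper simply invokes the maximality statement of Proposition \ref{c1}(ii). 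The properness assertions are where the routes really differ: the paper shows $\sigma(L^\infty,L^1)\neq\sigma_1$ by local solidity of $\sigma_1$ versus non-local-solidity of $\sigma(c_0,\ell^1)$ (an external citation), exhibits $\sigma_p\neq\sigma_q$ via limit points of the set $\{\sqrt[q]{n^\alpha}\chi_{A_n}\}$, and gets $\sigma_q\neq\tau(L^\infty,L^1)$ and $\tau(L^\infty,L^1)\neq\tau_\infty$ only indirectly from the chain of inclusions; you instead use the finite-codimensionality of kernels of weakly continuous seminorms, domination by a single multiplier seminorm $\Vert fG\Vert_p$ with $G=(\sum|g_i|^p)^{1/p}$, a norm-null sequence whose closed absolutely convex hull is weakly compact, and a Banach-limit functional. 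Your versions are more self-contained (no appeal to the $c_0$ result) and give direct witnesses for every single gap in the chain, at the cost of a longer case analysis. One phrase to repair: in part (ii) the functions $f_n=2\chi_{E_n}$ show that the norm ball of radius $1$ is not a $\tau_{\text{O}}$-neighbourhood of $0$ (since $f_n\to 0$ order-theoretically while $\Vert f_n\Vert_\infty=2$), which is what you need; the stronger claim that no $\tau_{\text{O}}$-neighbourhood of $0$ is norm bounded does not follow from this example, but it is also not needed.
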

\begin{proof}

{[$\sigma(L^\infty,L^1)\subset\sigma_1$]} For every $f\in  L^\infty$ and $g\in L^1$ one has $\left|\integral{fg}\right|\le\integral{|fg|}$.

{[$\sigma_p\subset \sigma_q$ where $1\le p\le q$]}  It is enough to show that for every $g\in  L^p$ and $\varepsilon>0$ there are $h\in L^q$ and $\delta>0$ such that $\norm{p}{fg}<\varepsilon$ holds for every $f\in L^\infty$ satisfying $\norm{q}{fh}<\delta$.  Fix an arbitrary $0\neq g\in L^p$.  (We can suppose that $g(x)\ge 0$ \muaew.)  First observe that for $f\in L^\infty$, $f(x)\ge 0$ \muaew,  one has: $fg^{\frac{p}{q}}\in L^q$, $f^{p-1}g^{\frac{p}{q^\ast}}\in L^{q^\ast}$ (where $q^\ast$ equals the conjugate exponent of $q$) and, by the H\"older inequality
\begin{equation}\label{e4}
\norm{p}{fg}^p \,=\, \norm{1}{f^p g^p} \,\le\, \norm{q}{fg}^{\frac{p}{q}}\,\norm{q^\ast}{f^{p-1} g^{\frac{p}{q^\ast}}}\, \le\, \norm{q}{fg^{\frac{p}{q}}}\,\norm{\infty}{f}^{p-1}\,\norm{p}{g}^{\frac{p}{q^\ast}}.
\end{equation}
Set $h:=g^{\frac{p}{q}}$ and
\[\delta:=(\varepsilon/2)^p\,\min\left( \norm{p}{g}^{\frac{-p}{q^\ast}}, \sqrt[q]{\varepsilon/2}\right).\]
Suppose that $f\in L^\infty$ satisfies $\norm{q}{fh}<\delta$.  Let $A:=\set{x\in X}{|f(x)|>1}$ and $B:=X\mysetminus A$.  Then
\[\norm{p}{\chi_A f g}^p=\rintegral{A}{|f|^pg^p}\le\integral{|f|^qh^q}<\delta^q\le(\varepsilon/2)^p\]
and,  by virtue of (\ref{e4}),
\[\norm{p}{\chi_B f g}^p\le \norm{q}{\chi_Bfg^{\frac{p}{q}}}\,\norm{p}{g}^{\frac{p}{q^\ast}}\le  \norm{q}{fh}\,\norm{p}{g}^{\frac{p}{q^\ast}}<(\varepsilon/2)^p.\]
This implies that $\norm{p}{fg}\le \norm{p}{\chi_Afg}+\norm{p}{\chi_B fg}<\varepsilon$.

{[$\sigma_q\subset\tau(L^\infty,L^1)$ and $\tau(L^\infty,L^1)\subset \tau_{\text{O}}(L^\infty)$]} These follow immediately by Proposition \ref{c1}.

{[$\tau_{\text{O}}(L^\infty)\subset \tau_\infty$]}  If $(x_n)_{n\in\N}$ is a sequence in $L^\infty$ satisfying $\lim_{n\to\infty}\Vert x_n\Vert_\infty= 0$, then
\[-\inf_{k\ge n}\norm{\infty}{x_k}\,\mathds 1\le x_n\le \sup_{k\ge n}\norm{\infty}{x_k}\,\mathds 1,\]
i.e. $(x_n)_{n\in\N}$ is O$_1$-convergent to $0$.

{[$\tau_\mu\subset\sigma_1$]}  If $F\in\Sigma$ and $\mu(F)<\infty$, then $\chi_F\in  L^1$ and  $\integral{|f|\wedge \chi_F}\le \integral{|f|\chi_F}$.

Now we suppose that $L^\infty$ is infinite-dimensional.  To this end we suppose that $(A_n)_{n\in\N}$ is a disjoint sequence in $\Sigma$ satisfying $0<\mu(A_n)<\infty$ for every $n$.  The map $\imath:c_0\to  L^\infty:(\lambda_n)_{n\in\N}\mapsto\sum_{n=1}^{\infty}\lambda_n \chi_{A_n}$ is a Banach space embedding of $c_0$ into $L^\infty$.  Denote by $E_0$ the range of $\imath$.

{[$\sigma(L^\infty,L^1)\neq \sigma_1$]}     Since the topology $\sigma_1$ is locally solid, it is enough to show that $\sigma(L^\infty,L^1)$ is not locally solid.  Via the embedding $\imath$, the topology $\sigma(c_0,\ell^1)$ can be identified with the restriction of $\sigma(L^\infty,L^1)$ to $E_0$.  If $\sigma(L^\infty,L^1)$ was locally solid, then $\sigma(c_0,\ell^1)$ is locally solid as well.  This contradicts \cite[Theorem 6.9, pg. 42]{AliBurk1978}.

{[$\sigma_p\neq \sigma_q$.  From this it follows also that $\sigma_q\neq \tau(L^\infty,L^1)$.]}  Fix $1<\alpha<q/p$ and define $e_n:=\sqrt[q]{n^\alpha}\chi_{A_n}$.   To prove that $\sigma_p\neq \sigma_q$ we first show that $0$ fails to be  a limit point of $F:=\{e_n:n\in\N\}\subset \mathcal L^\infty$ with respect to $\sigma_q$ and then we go showing that $0$ is a limit point of the set $F$ w.r.t.  $\sigma_p$. The function
\[g=\sum_{n=1}^\infty\frac{1}{\sqrt[q]{n^{\alpha}\mu(A_n)}}\chi_{A_n}\]
belongs to $L^q$ and  $\norm{q}{e_ng}=1$ for every $n\in\N$. So $U:=\set{f\in L^\infty}{\norm{q}{fg}<1}$ is a $\sigma_q$-open neighbourhood of $0$ satisfying $U\cap F=\emptyset$.
On the other-hand let $h\in L^p$ and $\varepsilon>0$ be given.  Let $\lambda_n=\rintegral{A_n}{|h|^p}$.  Then $\sum_{n=1}^{\infty}\lambda_n<\infty$, so there exists an increasing sequence $(n_k)_{k\in\N}$ such that $n_k\lambda_{n_k}\leq 1$  for every $k\in\N$. Since $\alpha p-q<0$, we can find sufficiently large $k\in \N$ satisfying $\sqrt[q]{n_k^{\alpha p-q}}<\varepsilon$.
Then
\[
\norm{p}{e_{n_k}h}^p\,=\,\rintegral{A_{n_k}}{\biggl(\sqrt[q]{n_k^\alpha}\
 |h|\biggr)^p}\,=\,\sqrt[q]{n_k^{\alpha p}}\,\lambda_{n_k}\,=\,\sqrt[q]{n_k^{\alpha p-q}}\,(n_k\lambda_{n_k})\,<\, \varepsilon,\]
i.e. $0$ is a limit point of $F$ w.r.t.  $\sigma_p$.


{[$\tau_{\text{O}}(L^\infty)\neq \tau_\infty$.  From this it follows also that $\tau(L^\infty,L^1)\neq \tau_\infty$.]}  Let $B_n:=\bigcup_{k\ge n}A_k$.  Observe that  $\chi_{B_n}\downarrow 0$ in $L^\infty$, i.e.  $\left(\chi_{B_n}\right)_{n\in\N}$ converges to $0$ w.r.t. $\tau_{\text{O}}(L^\infty)$ but not w.r.t. $\tau_\infty$.

{[$\tau_\mu\neq \sigma_1$]}  The sequence $\left(n^2\chi_{A_n}\right)_{n\in\N}$ satisfies $\rho_E(n^2\chi_{A_n})\to 0$, as $n\to\infty$, for every $E\in\Sigma$ satisfying $\mu(E)<+\infty$.  On the other-hand, if we let $1<\alpha<2$, the function $h:=\sum_{k=1}^{\infty}\frac{1}{\mu(A_k)k^\alpha}\chi_{A_k}$ belongs to $L^1$ and $\integral{n^2\chi_{A_n} h}=n^{2-\alpha}$, i.e. the sequence $\left(n^2\chi_{A_n}\right)_{n\in\N}$ converges to $0$ w.r.t. $\tau_\mu$ but not w.r.t. $\sigma_1$.

{\rm(iii)} We recall that if $\msp$ is $\sigma$-finite, $\tau_\mu$ is metrizable and for every sequence $(f_n)_{n\in\N}$ of measurable functions converging in measure to the function $f$,  there is a subsequence $(f_{n_k})_{k\in\N}$ that converges pointwise \muaew to $f$.  This implies that when $\msp$ is $\sigma$-finite,  every order-closed subset of the unit ball of $L^\infty$ is closed w.r.t. $\tau_\mu$.
\end{proof}

By {\rm(iii)} of Theorem \ref{t3} it follows, for a $\sigma$-finite measure, that $\tau_{\text{O}}(L^\infty)$ and $\tau(L^\infty,L^1)$ are equal if and only if $\mu$ is purely atomic.

 In Theorem \ref{t5} we give a general condition under which the topologies $\tau_{\text{O}}(L^\infty)$ and $\tau(L^\infty,L^1)$ are different.  In its proof we shall make use of the following lemma.  For $A\in\Sigma$ let $L^\infty(A):=\{f\chi_A:f\in L^\infty\}$ and $L^1(A):=\{f\chi_A:f\in L^1\}$.  It can easily been verified that $L^\infty(A)$ and $L^1(A)$ are isomorphic (as Banach lattices) to $L^\infty(A,\Sigma_A,\mu_A)$ and $L^1(A,\Sigma_A,\mu_A)$, respectively, where $\Sigma_A:=\{A\cap E: E\in\Sigma\}$ and $\mu_A:=\restr{\mu}{\Sigma_A}$.  In what follows we shall not distinguish between $L^\infty(A)$ and $L^\infty(A,\Sigma_A,\mu_A)$, and between $L^1(A)$ and $L^1(A,\Sigma_A,\mu_A)$.

\begin{lem}
Let $A\in\Sigma$.
\begin{enumerate}[{\rm(i)}]
\item The restriction of $\sigma(L^1,L^\infty)$ to $L^1(A)$ (and the restriction of $\tau(L^\infty,L^1)$ to $L^\infty(A)$) is equal to the topology $\sigma(L^1(A),L^\infty(A))$ (resp. $\tau(L^\infty(A),L^1(A))$) arising from the duality $\langle L^1(A),L^\infty(A)\rangle$.
\item The order topology $\tau_{\text{O}}(L^\infty(A))$ is equal to the restriction of $\tau_{\text{O}}(L^\infty)$ to $L^\infty(A)$.
    \end{enumerate}
\end{lem}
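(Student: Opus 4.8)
The plan rests on the fact that $L^\infty(A)$ is a projection band in $L^\infty$ (and $L^1(A)$ one in $L^1$), the band projection being $P_A\colon f\mapsto f\chi_A$ --- a positive, order-continuous lattice homomorphism satisfying $P_A\circ\iota_A=\id$, where $\iota_A\colon L^\infty(A)\hookrightarrow L^\infty$ is the inclusion. Two elementary identities will be used repeatedly: $\langle f\chi_A,g\rangle=\langle f,g\chi_A\rangle$ for $f\in L^1$, $g\in L^\infty$; and, writing $A^c:=X\mysetminus A$, $L^1(A)=\set{h\in L^1}{\langle h,g\rangle=0\ \text{for all }g\in L^\infty(A^c)}$, so that $L^1(A)$ is $\sigma(L^1,L^\infty)$-closed. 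Note also that $(A,\Sigma_A,\mu_A)$ is again semi-finite, so Propositions \ref{p2}, \ref{c1} and Theorem \ref{t4} apply verbatim to it. For the first assertion of {\rm(i)} this is already enough: on $L^1(A)$ the seminorm $f\mapsto|\langle f,g\rangle|$ equals $f\mapsto|\langle f,g\chi_A\rangle|$ for every $g\in L^\infty$, and $\set{g\chi_A}{g\in L^\infty}=L^\infty(A)$, so the two families of seminorms defining $\sigma(L^1,L^\infty)|_{L^1(A)}$ and $\sigma(L^1(A),L^\infty(A))$ literally coincide.

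\textbf{Part (ii), the core.} I would first record that suprema and infima of subsets of $L^\infty(A)$ are the same whether computed in $L^\infty(A)$ or in $L^\infty$: if $D\subseteq L^\infty(A)$ and $u\in L^\infty$ is an upper bound of $D$, then $u\ge 0$ on $A^c$ (each element of $D$ vanishes there) and $P_Au\ge P_Ad=d$ for all $d\in D$, so $u\ge x$ whenever $x=\sup D$ in $L^\infty(A)$; the infimum case is dual. Consequently both $\iota_A$ and $P_A$ carry O$_1$-convergent nets to O$_1$-convergent nets (for $P_A$ one uses its order continuity), and since $L^\infty$ and $L^\infty(A)$ are lattices --- where all three modes of order convergence agree --- both maps are continuous for the order topologies, because the preimage of an O-closed set under an O-convergence-preserving map is again O-closed. (One uses here that the O-closed sets are exactly the $\tau_{\text{O}}$-closed sets: they are stable under arbitrary intersections and, via the cofinal-subnet remark, under finite unions.) Now for $\xx\subseteq L^\infty(A)$: if $\xx$ is $\tau_{\text{O}}(L^\infty(A))$-closed, then $P_A^{-1}(\xx)$ is $\tau_{\text{O}}(L^\infty)$-closed and $P_A^{-1}(\xx)\cap L^\infty(A)=\xx$, so $\xx$ is closed in $\tau_{\text{O}}(L^\infty)|_{L^\infty(A)}$; conversely, if $\xx=C\cap L^\infty(A)$ with $C$ $\tau_{\text{O}}(L^\infty)$-closed and a net in $\xx$ O-converges in $L^\infty(A)$ to some $x\in L^\infty(A)$, then applying $\iota_A$ the net O-converges to $x$ in $L^\infty$, whence $x\in C\cap L^\infty(A)=\xx$.

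\textbf{The Mackey assertion of (i).} This I would deduce from {\rm(ii)}. Restricting the inclusion $\tau(L^\infty,L^1)\subseteq\tau_{\text{O}}(L^\infty)$ of Theorem \ref{t4}{\rm(ii)} to $L^\infty(A)$ and invoking {\rm(ii)}, the topology $\tau(L^\infty,L^1)|_{L^\infty(A)}$ is a Hausdorff, locally convex, order-continuous topology on $L^\infty(A)$, hence $\subseteq\tau(L^\infty(A),L^1(A))$ by Proposition \ref{c1}{\rm(ii)} applied to $(A,\Sigma_A,\mu_A)$. For the reverse inclusion, a Hahn--Banach extension together with $\langle\cdot,g\rangle|_{L^\infty(A)}=\langle\cdot,g\chi_A\rangle|_{L^\infty(A)}$ identifies the topological dual of $(L^\infty(A),\tau(L^\infty,L^1)|_{L^\infty(A)})$ with $L^1(A)$; thus this restricted topology is consistent with the duality $\langle L^\infty(A),L^1(A)\rangle$ and is therefore coarser than the Mackey topology $\tau(L^\infty(A),L^1(A))$. (Alternatively, arguing directly: by Proposition \ref{c1}{\rm(i)}, $\tau(L^\infty,L^1)$ is uniform convergence on the absolutely convex, solid, relatively $\sigma(L^1,L^\infty)$-compact subsets of $L^1$; one checks that $P_A$ maps such sets into such sets inside $L^1(A)$, and that, by Proposition \ref{p2}, relative $\sigma(L^1,L^\infty)$-compactness of a subset of $L^1(A)$ is decided inside $L^1(A)$.)

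\textbf{Main obstacle.} The delicate point is {\rm(ii)}: since order convergence is not topological, one cannot argue by ``taking closures of nets'' and must instead work directly at the level of closed sets, the decisive input being that $P_A$ and $\iota_A$ preserve O-convergence. In {\rm(i)} the subtlety is the familiar one that the Mackey topology need not restrict to the Mackey topology of a subspace; what makes it work here is precisely that $L^\infty(A)$ is a complemented band, exploited through Propositions \ref{p2} and \ref{c1}.
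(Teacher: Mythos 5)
Your treatment of (ii) is exactly the paper's argument, only written out in full: the paper's entire proof of (ii) is the observation that $f\mapsto f\chi_A$ is isotone, idempotent and preserves suprema and infima, hence closed and continuous w.r.t. $\tau_{\text{O}}(L^\infty)$ and $\tau_{\text{O}}(L^\infty(A))$; your verification that suprema and infima of subsets of $L^\infty(A)$ are unambiguous and that $P_A$ and $\iota_A$ preserve O-convergence is precisely the content of that remark. Part (i) is dismissed in the paper as a straightforward verification, so the extra detail you supply there is not in conflict with anything.

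There is, however, one genuine logical slip in your Mackey argument. The paragraph labelled ``for the reverse inclusion'' does not prove the reverse inclusion: identifying the dual of $(L^\infty(A),\tau(L^\infty,L^1)|_{L^\infty(A)})$ with $L^1(A)$ shows that the restricted topology is \emph{consistent} with the duality $\langle L^\infty(A),L^1(A)\rangle$, and consistency only yields that it is \emph{coarser} than the Mackey topology $\tau(L^\infty(A),L^1(A))$ --- which is the inclusion you had already obtained from Proposition \ref{c1}(ii). It says nothing about $\tau(L^\infty(A),L^1(A))\subset\tau(L^\infty,L^1)|_{L^\infty(A)}$; indeed, as you yourself note in your closing remark, the Mackey topology of a dual pair need not restrict to the Mackey topology on a subspace precisely because a consistent topology can be strictly coarser than the finest one. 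The parenthetical ``alternative'' is in fact the argument you need: an absolutely convex, relatively $\sigma(L^1(A),L^\infty(A))$-compact $K\subset L^1(A)$ is uniformly integrable, hence by Proposition \ref{p2} relatively $\sigma(L^1,L^\infty)$-compact as a subset of $L^1$, and for $f\in L^\infty(A)$ the seminorm $\sup_{g\in K}|\langle f,g\rangle|$ is the same whether computed in the large or the small duality; this gives the missing inclusion. The proof is correct once that parenthetical is promoted from an aside to the actual argument.
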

\begin{proof}
The first assertion requires merely a straightforward verification.  The second follows by observing that the map $L^\infty\to L^\infty(A):f\mapsto f\chi_A$ is isotone, idempotent, and preserves suprema and infima; so it is  closed and continuous w.r.t. $\tau_{\text{O}}(L^\infty)$ and $\tau_{\text{O}}(L^\infty(A))$.
\end{proof}

By the Lemma it follows that if $\tau_{\text{O}}(L^\infty)=\tau(L^\infty,L^1)$, then $\tau_{\text{O}}(L^\infty(A))=\tau(L^\infty(A),L^1(A))$ holds for every $A\in \Sigma$.

\begin{thm}\label{t5}
Let $A\in\Sigma$ satisfy $\mu(A)\ne 0$ and such that it contains no $\mu$-atoms.  Then
$\tau_{\text{O}}(L^\infty)$ and $\tau(L^\infty,L^1)$ are not equal.
\end{thm}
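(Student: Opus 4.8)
The first move is to pass to the atomless situation. By the consequence of the preceding Lemma (namely that $\tau_{\text{O}}(L^\infty)=\tau(L^\infty,L^1)$ implies $\tau_{\text{O}}(L^\infty(B))=\tau(L^\infty(B),L^1(B))$ for every $B\in\Sigma$), and by semi-finiteness (which lets me shrink $A$ to some $A_0\subseteq A$ with $0<\mu(A_0)<\infty$, still atomless), it suffices to prove the statement when $(X,\Sigma,\mu)$ is itself a \emph{finite} --- hence $\sigma$-finite --- atomless measure space, and after normalisation a probability space. In this setting I shall exhibit a set $C\subseteq L^\infty$ which is $\tau_{\text{O}}(L^\infty)$-closed, does not contain $0$, yet has $0$ in its $\tau(L^\infty,L^1)$-closure; this proves $\tau_{\text{O}}(L^\infty)\neq\tau(L^\infty,L^1)$. (Conceptually, by Theorem \ref{t4}(ii) one has $\tau(L^\infty,L^1)\subseteq\tau_{\text{O}}(L^\infty)$, and $\tau_{\text{O}}(L^\infty)$ is Hausdorff --- being finer than $\tau_\mu$ --- and order-continuous; so by Proposition \ref{c1}(ii) the two topologies coincide exactly when $\tau_{\text{O}}(L^\infty)$ is locally convex, and the set $C$ witnesses that it is not.)

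\textbf{The construction.} The key is the disparity, for atomless $\mu$, between how $\tau_{\text{O}}(L^\infty)$ behaves on bounded and on unbounded subsets. By Theorem \ref{t4}(iii), on every ball $\{f:\|f\|_\infty\le R\}$ the topology $\tau_{\text{O}}(L^\infty)$ restricts to $\tau_\mu$; consequently $C$ will be $\tau_{\text{O}}(L^\infty)$-closed as soon as $C\cap\{\|f\|_\infty\le R\}$ is $\tau_\mu$-closed for every $R>0$ (any order-convergent net is eventually in a bounded order interval, hence in such a ball, and there order convergence is convergence in measure). For \emph{unbounded} nets, however, the two topologies diverge: a net whose sup-norms tend to $\infty$ can never order-converge, because it admits no majorant decreasing to $0$ --- whereas, using atomlessness, a function of arbitrarily large sup-norm whose mass is sufficiently "spread out'' can be made $\tau_\mu$- and $\tau(L^\infty,L^1)$-small. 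Concretely, atomlessness allows one to produce, for each absolutely convex solid uniformly integrable $K\subseteq L^1$ and each $\varepsilon>0$, a function $h_{K,\varepsilon}\in L^\infty$ with $\|h_{K,\varepsilon}\|_\infty$ as large as prescribed and $\sup_{g\in K}|\langle h_{K,\varepsilon},g\rangle|<\varepsilon$. Taking $C$ to consist of (suitable profiles of) these $h_{K,\varepsilon}$'s --- arranged so that on each ball $C$ stays $\tau_\mu$-closed and bounded away from $0$ --- the net $(h_{K,\varepsilon})$, directed by refinement of $K$ together with $\varepsilon\downarrow 0$, converges to $0$ in $\tau(L^\infty,L^1)$: indeed, by Proposition \ref{p2} the polars of the absolutely convex solid uniformly integrable subsets of $L^1$ form a base of $\tau(L^\infty,L^1)$-neighbourhoods of $0$, and $h_{K,\varepsilon}$ lies in the $\varepsilon$-multiple of every such polar coming from a $K'\subseteq K$. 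Hence $0$ is in the $\tau(L^\infty,L^1)$-closure of $C$, while $0\notin C$ by construction.

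\textbf{The main obstacle.} The delicate step --- and the heart of the proof --- is to choose the profiles of the functions $h_{K,\varepsilon}$ so that the two demands are met simultaneously: on one hand each $h_{K,\varepsilon}$ must be genuinely $\tau(L^\infty,L^1)$-small against $K$ (this restricts how "peaked'' it may be), and on the other hand no norm-bounded sub-family of the $h_{K,\varepsilon}$'s may converge in measure to $0$ (otherwise, by Theorem \ref{t4}(iii), $0$ would re-enter the $\tau_{\text{O}}(L^\infty)$-closure of $C$). Balancing these forces the use of atomlessness in full strength: for every small measure one needs a set of exactly that measure, and hence "spread'' functions of every sup-norm carrying every prescribed integral, so that the large-sup-norm members of $C$ can be kept $\tau_\mu$-separated from $0$ while still being $\tau(L^\infty,L^1)$-negligible. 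In the purely atomic case the analogous construction is impossible, in agreement with \cite[Corollary 4.4]{ChHaWe}; it is precisely the presence of the atomless part $A$ that breaks this.
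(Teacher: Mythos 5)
Your overall strategy is the paper's: reduce, via the preceding Lemma and semi-finiteness, to a finite atomless piece $A$, and then exhibit a subset of $L^\infty(A)$ that is $\tau_{\text{O}}$-closed yet has $0$ in its $\tau(L^\infty(A),L^1(A))$-closure, exploiting the fact that $\tau_{\text{O}}$ agrees with $\tau_\mu$ only on norm-bounded parts while the Mackey neighbourhoods of $0$ are governed by uniform integrability. The problem is that the witnessing set $C$ is never actually produced. Your second paragraph postulates functions $h_{K,\varepsilon}$, one for each absolutely convex solid uniformly integrable $K\subseteq L^1$ and each $\varepsilon>0$, ``arranged so that on each ball $C$ stays $\tau_\mu$-closed and bounded away from $0$,'' and your third paragraph then concedes that choosing the profiles so that both demands are met simultaneously is ``the delicate step --- and the heart of the proof.'' That step is the entire mathematical content of the theorem; describing the tension between the two requirements is not the same as resolving it. As it stands there is no argument that such a family exists, and indexing it by the (enormous, unstructured) collection of all uniformly integrable $K$ gives no handle on why its norm-bounded slices should be $\tau_\mu$-closed.

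For comparison, the paper resolves the tension with a completely explicit two-parameter family. Take a dyadic tree $A_{n\,i}$ ($i=1,\dots,2^n$) of subsets of $A$ with $\mu(A_{n\,i})=2^{-n}\mu(A)$, put $A_n:=\bigcup_{i=1}^{2^{n-1}}A_{n\,2i}$ and $B_n:=A_{n\,1}$, and set $F:=\bigl\{\tfrac{1}{m}\chi_{A_n}+m\chi_{B_n}:m,n\in\N\bigr\}$. The point of the sets $A_n$ is that $\mu(A_n)=\mu(A)/2$ for all $n$ while $\mu(A_n\,\triangle\,A_{n+1})=\mu(A)/2$ whenever the indices differ, so for fixed $m$ the functions $\tfrac1m\chi_{A_n}$ form a $\tau_\mu$-discrete family; since an O-convergent sequence in $F$ is norm-bounded (hence has $m$ eventually constant) and $m\chi_{B_n}\downarrow 0$, order convergence would force $\tau_\mu$-convergence of $\tfrac1m\chi_{A_{n_j}}$, which the separation in measure forbids --- so $F$ is order-closed (using monotone order separability of $L^\infty(A)$ to reduce to sequences). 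On the other hand, given $K$ one first picks $m\ge 2\gamma_K(\chi_A)$ to kill the $\tfrac1m\chi_{A_n}$ term and then, since $B_n\downarrow\emptyset$ and $K$ is uniformly integrable, picks $n$ with $\gamma_K(\chi_{B_n})\le 1/(2m)$ to kill the $m\chi_{B_n}$ term; hence $F$ meets every Mackey neighbourhood of $0$. Note that this is the opposite balance from the one you describe: it is the \emph{small-amplitude, large-support} part $\tfrac1m\chi_{A_n}$ that prevents $\tau_\mu$-convergence on bounded slices, while the \emph{large-amplitude} part $m\chi_{B_n}$ is made Mackey-negligible by shrinking its support. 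Some concrete construction of this kind is what your proof still needs.
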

\begin{proof}

 In view of the semi-finiteness assumption, we can assume that $\mu(A)<+\infty$.  We shall exhibit a set $F\subset L^\infty(A)$ that is closed w.r.t. $\tau_{\text{O}}(L^\infty(A))$ but not closed w.r.t. $\tau(L^\infty(A),L^1(A))$.

 Let $A_{n\,i}$ ($n\in\N$, $i=1,\dots,2^n$) be a tree in $\Sigma$ such that $A$ is the disjoint union of $A_{1\,1}$ and $A_{1\,2}$, and $A_{n-1\,i}$ is the disjoint union of $A_{n\,2i-1}$ and $A_{n\,2i}$.  We can choose these sets such that $\mu(A_{n\,i})=2^{-n}\mu(A)$.  Set $A_n:=\bigcup_{i=1}^{2^{n-1}}A_{n\,2i}$ and $B_n:=A_{n\,1}$. We shall show that
 \[F:=\left\{\frac{1}{m}\chi_{A_n}+m\chi_{B_n}:m,n\in\N\right\}\]
 is closed w.r.t. $\tau_{\text{O}}(L^\infty(A))$, but $0$ is in the $\tau(L^\infty(A),L^1(A))$-closure of $F$.

 $F$ is $\tau_{\text{O}}(L^\infty(A))$-closed:  $L^\infty(A)$ is a conditionally complete lattice and  monotone order separable.  By \cite[Prop. 3]{BuChWe2020} it suffices to show that for every O-convergent sequence $(f_j)_{j\in\N}$ in $F$, its order limit $f$ belongs to $F$.  Let \[f_j=\frac{1}{m_j}\chi_{A_{n_j}}+m_j\chi_{B_{n_j}}.\]
 Since $(f_j)_{j\in\N}$ is O-convergent, it is norm bounded and therefore, by passing to a subsequence, we may suppose that there exists $m\in\N$ such that $m_j=m$ for every $j\in\N$.  Passing again to a subsequence we may assume that $(n_j)_{j\in\N}$ is constant or strictly increasing.  We can suppose that we have the second case.  The sequence $(m\chi_{B_{n_j}})_{j\in\N}$ decreases to $0$ in $L^\infty(A)$.  Therefore $\bigl(m^{-1}\chi_{A_{n_j}}\bigr)_{j\in\N}$ order converges to $f$.  This
 implies that $\bigl(m^{-1}\chi_{A_{n_j}}\bigr)_{j\in\N}$ is convergent w.r.t. the F-seminorm $\rho_A$, and therefore $\mu(A_{n_j}\,\triangle\,A_{n_{j+1}})\to 0$ as $j\to \infty$.  Since $\mu(A_{n_j}\,\triangle\,A_{n_{j+1}})=\mu(A)/2$ for every $j\in\N$ satisfying $n_{j+1}\neq n_j$, it follows that $(f_j)_{j\in\N}$ has a subsequence which is constant, i.e. $f\in F$.

 $0$ belongs to the $\tau(L^\infty(A),L^1(A))$-closure of $F$:  Let $K\subset L^1(A)$ be absolutely convex and relatively compact w.r.t. $\sigma(L^1(A),L^\infty(A))$. We show that there exists $f\in L^\infty(A)$ such that
 $\gamma_K(f):=\sup_{g\in K}|\langle f,g\rangle|\le 1$.  First, choose $m\in\N$ such that $m\ge 2\gamma_K(\chi_A)$.  Since $B_n\downarrow \emptyset$, and $K$ is uniformly integrable, there exists $n\in\N$ such that $\gamma_K(\chi_{B_n})\le 1/(2m)$.  Then $f:=\frac{1}{m}\chi_{A_n}+m\chi_{B_n}$ belongs to $F$ and
\[\gamma_K(f)\le \frac{1}{m}\gamma_K(\chi_{A_n})+m\gamma_K(\chi_{B_n})\le \frac{1}{m}\gamma_K(\chi_A)+m\gamma_K(\chi_{B_n})\le 1.\]

\end{proof}

\subsection{The order topology on $\saM$.}  Let $M$ denote a von Neumann algebra with predual $M_\ast$.  Denote by $\saM$ the self-adjoint part of $M$, and by $M_\ast^+$ the positive cone of $M_\ast$.
Between the restrictions of the weak$^\ast$-topology $\sigma(M,M_\ast)$  and the  Mackey topology
 $\tau(M,M_\ast)$, on $\saM$ lies the restriction of the $\sigma$-strong topology $s(M,M_\ast)$ determined by the family of seminorms $\set{\rho_\psi}{\psi\in M_{\ast}^+}$ where
$\rho_{\psi}(x)=\sqrt{\psi(x^\ast x)}$. $\saM$ is a real vector space and when endowed with the partial order $\leq$ induced by the cone $M^+:=\set{x^\ast x}{x\in M}$ it gets the structure of an ordered vector space.   In general $M_{sa}$ is far from being a Riesz space;  in fact, in \cite{Sherman1951} it is shown that if $\saM$ is a lattice then $M$ is abelian.  The following theorem can be found in \cite{ChHaWe}.

\begin{thm}\label{t3} Let $M$ be a von Neumann algebra.
\begin{enumerate}[{\rm(i)}]
\item $
\sigma(M,M_\ast)\,\subset\, s(M,M_\ast)\,\subset\,\tau(M,M_\ast)\,\subset\,\ttt_{\infty}$
and  -- unless $M$ is finite-dimensional -- all of these inclusions are proper.
\item $\tau(M,M_\ast)|_{\saM}\,\subset\,\tau_{\text{O}}(\saM)\,\subset\,\ttt_{\infty}|\saM$ and $\tau_{\text{O}}(\saM)=\ttt_{\infty}|\saM$ if and only if $M$ is finite-dimensional.
\item If $M$ is $\sigma$-finite, $\tau(M,M_\ast)|_{\saM}=\tau_{\text{O}}(\saM)$ if and only if the unit ball of $M$ is $s(M,M_\ast)$-compact if and only if every von Neumann subalgebra of $M$ is atomic (i.e. $M$ is purely atomic).
\item  If $M$ is $\sigma$-finite, the restrictions of $s(M,M_\ast)$ and $\tau_{\text{O}}(\saM)$ to bounded parts of $\saM$ are equal.
\end{enumerate}
\end{thm}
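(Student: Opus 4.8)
The plan is to proceed along the lines of Theorem~\ref{t4}, transplanting the commutative arguments to the ordered vector space $\saM$ and replacing the measure-theoretic ingredients by their noncommutative counterparts. For part~(i), the chain of inclusions is obtained as follows: $\sigma(M,M_\ast)\subset s(M,M_\ast)$ from the Cauchy--Schwarz estimate $|\psi(x)|\le\psi(\mathbf 1)^{1/2}\rho_\psi(x)$ for $\psi\in M_\ast^+$ together with the decomposition of a normal functional into positive ones; $s(M,M_\ast)\subset\tau(M,M_\ast)$ because $s(M,M_\ast)$ is a locally convex topology with topological dual $M_\ast$ (a functional on $M$ is $\sigma$-strongly continuous iff it is normal), hence coarser than the Mackey topology of $\langle M,M_\ast\rangle$; and $\tau(M,M_\ast)\subset\ttt_\infty$ because $\tau(M,M_\ast)$ is uniform convergence on a family of norm-bounded subsets of $M_\ast$. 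For the properness when $M$ is infinite-dimensional, I would restrict everything to the abelian subalgebra $N\cong\ell^\infty$ generated by an infinite orthogonal family of nonzero projections: the restriction $M_\ast\to N_\ast$ is onto, so these four topologies restrict on $N$ to $\sigma(\ell^\infty,\ell^1)$, $\sigma_2$, $\tau(\ell^\infty,\ell^1)$ and $\Vert\cdot\Vert_\infty$, and the counterexamples already produced in the proof of Theorem~\ref{t4} (with \cite[Thm.~6.9]{AliBurk1978} for the failure of local solidity of $\sigma(\ell^\infty,\ell^1)$) separate them.

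For part~(ii), the inclusion $\tau_{\text{O}}(\saM)\subset\ttt_\infty|_{\saM}$ is the squeeze lemma: if $\Vert x_n-x\Vert_\infty\to0$ then $x-c_n\mathbf 1\le x_n\le x+c_n\mathbf 1$ with $c_n:=\sup_{k\ge n}\Vert x_k-x\Vert_\infty\downarrow0$, so $(x_n)$ is $\text{O}_1$-convergent to $x$ and hence every $\tau_{\text{O}}(\saM)$-closed set is $\Vert\cdot\Vert_\infty$-closed. For $\tau(M,M_\ast)|_{\saM}\subset\tau_{\text{O}}(\saM)$ I would show that an $\text{O}_1$-convergent net is $\tau(M,M_\ast)$-convergent: if $y_\gamma\le x_\gamma\le z_\gamma$ eventually with $y_\gamma\uparrow x$, $z_\gamma\downarrow x$, then $|\psi(x_\gamma-x)|\le\psi(z_\gamma-y_\gamma)$ for $\psi\in M_\ast^+$, and for an absolutely convex $\sigma(M_\ast,M)$-compact $K\subset M_\ast$ one gets $\sup_{\psi\in K}|\psi(x_\gamma-x)|\to0$ from the fact that such a $K$ is uniformly order-continuous (the noncommutative analogue of Proposition~\ref{p2}(vi)), using also that the appropriate hull of a weakly compact subset of $M_\ast$ remains weakly compact. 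Finally $\tau_{\text{O}}(\saM)=\ttt_\infty|_{\saM}$ holds in finite dimensions, where a finite-dimensional Archimedean ordered vector space with generating cone carries a unique Hausdorff linear topology, and fails otherwise because $\sum_{k\ge n}p_k\downarrow0$ is $\tau_{\text{O}}$-null but not norm-null.

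For part~(iii) (with $M$ $\sigma$-finite) I would prove the three conditions equivalent cyclically. If $M$ is not purely atomic it has a non-atomic von Neumann subalgebra, hence -- passing to a corner and then to a maximal abelian subalgebra of it -- a diffuse abelian von Neumann subalgebra $N\cong L^\infty(Y,\Sigma,\nu)$ with $\nu$ semi-finite and having an atomless set of positive measure; a reduction argument as in the Lemma preceding Theorem~\ref{t5} (the conditional expectation onto $N$ is isotone, idempotent and normal, so the order topologies restrict correctly, and likewise the localised Mackey topologies) together with Theorem~\ref{t5} yields $\tau_{\text{O}}(\saM)\ne\tau(M,M_\ast)|_{\saM}$, and the unit ball of $M$ cannot be $s(M,M_\ast)$-compact since it would then contain the $s$-compact unit ball of $N$, which is impossible as $s$ restricts to $\sigma_2$ on $N$ and Rademacher-type functions witness the non-$\sigma_2$-compactness of the unit ball of a diffuse $L^\infty$. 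Conversely, if $M$ is purely atomic the structure theory exhibits it as an $\ell^\infty$-direct sum of algebras $M_k(\C)\otimes\ell^\infty(J)$ with $k<\infty$, on each of which one checks directly that the unit ball is $s$-compact (Tychonoff) and that $\tau_{\text{O}}=\tau$ (reducing to the purely atomic commutative case, i.e. the remark following Theorem~\ref{t4}), these properties passing to the direct sum; and $s(M,M_\ast)$-compactness of the unit ball $B$ gives $\tau(M,M_\ast)|_{\saM}=\tau_{\text{O}}(\saM)$ via part~(iv): on $B$ one has $\sigma(M,M_\ast)|_B=s(M,M_\ast)|_B$ (a continuous bijection from a compact space onto a Hausdorff one is a homeomorphism) $=\tau_{\text{O}}(\saM)|_B$, and a Krein--Smulian-type globalization -- using that $\tau_{\text{O}}$-closedness is detected on balls, since an $\text{O}_1$-convergent net is eventually order-bounded, hence eventually norm-bounded -- lifts the equality to all of $\saM$.

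Part~(iv) (with $M$ $\sigma$-finite) is where I expect the main obstacle. One inclusion, $s(M,M_\ast)|_B\subset\tau_{\text{O}}(\saM)|_B$ on a norm-bounded $B$, is immediate from (i) and (ii). For the reverse I would fix a faithful normal state $\psi$ on $M$ (available precisely because $M$ is $\sigma$-finite), so that $\rho_\psi$ metrizes $s(M,M_\ast)$ on $B$; it then suffices to show that an $\text{O}_1$-closed $C\subset B$ contains the $s$-limit of every sequence drawn from $C$. Here the Noncommutative Egoroff Theorem enters: a norm-bounded $s$-convergent sequence has a subsequence $(x_{n_k})$ and projections $e_j\uparrow\mathbf 1$ with $\Vert(x_{n_k}-x)e_j\Vert\to0$ for every $j$, and out of these data one assembles order bounds $u_k\uparrow x$, $v_k\downarrow x$ with $u_k\le x_{n_k}\le v_k$ eventually, so that $(x_{n_k})$ $\text{O}_1$-converges to $x$ and, $C$ being $\text{O}_1$-closed, $x\in C$; metrizability of $s|_B$ then gives that $C$ is $s$-closed. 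The two genuinely delicate steps, I expect, are (a) this passage from Egoroff-type almost-uniform convergence to honest order bounds \emph{inside the non-lattice-ordered space} $\saM$ -- exactly the point at which $\sigma$-finiteness is unavoidable in this approach, and which the present paper circumvents for atomic $M$ -- and (b), in part~(ii), the uniform order-continuity of weakly compact subsets of $M_\ast$ needed to upgrade weak convergence to Mackey convergence.
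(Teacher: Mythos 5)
You should first be aware that this paper does not actually prove Theorem~\ref{t3}: it is introduced with the sentence ``The following theorem can be found in \cite{ChHaWe}'' and is imported from that reference without argument, so there is no in-paper proof to compare your attempt against. Judged on its own terms, your outline is consistent with the little the paper does tell us about the original proof --- the introduction states that the bounded-part result (iv) ``is based on the Noncommutative Egoroff Theorem and rests heavily on the assumption of $\sigma$-finiteness'', which is precisely the route you take, and your device of restricting to abelian von Neumann subalgebras so as to recycle the counterexamples of Theorem~\ref{t4} and the diffuse-case separation of Theorem~\ref{t5} is the natural noncommutative transfer. That said, the proposal is a plan rather than a proof: the two steps you yourself flag as delicate are exactly the ones carrying the mathematical weight and are not routine. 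For (iv), passing from Egoroff data $\Vert (x_{n_k}-x)e_j\Vert\to 0$ to genuine monotone order bounds $u_k\uparrow x$, $v_k\downarrow x$ inside $\saM$ --- which is not a lattice, so one cannot take suprema of tails --- requires an explicit construction (compare the $p_\lambda:=d_\lambda^{-1}\mathds 1+(\mathds 1-e_\lambda)$ trick in the paper's final theorem, which is the atomic surrogate for it). For (ii), the inclusion $\tau(M,M_\ast)|_{\saM}\subset\tau_{\text{O}}(\saM)$ needs the Akemann-type characterization of relatively $\sigma(M_\ast,M)$-compact subsets of $M_\ast$ by uniform order-continuity, since the solid-hull argument used in the commutative Proposition~\ref{c1} is unavailable. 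Neither step is filled in, so as it stands parts (ii)--(iv) are sketches; to complete them you should consult \cite{ChHaWe} directly rather than this paper.
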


In the following theorem we show that in {\rm(iii)} of Theorem \ref{t3} -- even if we don't assume commutativity -- the condition of $\sigma$-finiteness is not a necessary one.  We show that when $M$ is atomic, then the restrictions of $s(M,M_\ast)$ and $\tau_{\text{O}}(\saM)$ to bounded parts of $\saM$ are equal.  The proof relies on our observation of Theorem \ref{t2}.

\begin{thm}
  If $M$ is an atomic von Neumann algebra, then the restrictions of $s(M,M_\ast)$ and $\tau_{\text{O}}(\saM)$ to bounded parts of $\saM$ are equal.
\end{thm}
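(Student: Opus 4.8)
Since $s(M,M_\ast)\subseteq\tau(M,M_\ast)$ and $\tau(M,M_\ast)|_{\saM}\subseteq\tau_{\text{O}}(\saM)$ by Theorem~\ref{t3}(i),(ii), on each ball $\bb_r:=\{x\in\saM:\|x\|\le r\}$ the topology $s(M,M_\ast)$ is already coarser than $\tau_{\text{O}}(\saM)$, so the plan is to prove the reverse: every $C\subseteq\bb_r$ that is closed for $\tau_{\text{O}}(\saM)|_{\bb_r}$ is closed for $s(M,M_\ast)|_{\bb_r}$. Fix such a $C$ and a net $(x_\gamma)_{\gamma\in\Gamma}$ in $C$ with $x_\gamma\to x$ in $s(M,M_\ast)$ and $\|x\|\le r$. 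I would aim to produce a directed set $\mm\subseteq\saM$ and a filtered set $\nn\subseteq\saM$ with $\vee\mm=\wedge\nn=x$ such that $x_\gamma$ lies eventually in $[m,n]$ for every $(m,n)\in\mm\times\nn$; this exhibits $(x_\gamma)_{\gamma\in\Gamma}$ as O$_2$-convergent to $x$, hence --- by Theorem~\ref{t2}(i), which identifies $\tau_{\text{O$_2$}}(\saM)$ with $\tau_{\text{O$_1$}}(\saM)=\tau_{\text{O}}(\saM)$ --- as $\tau_{\text{O}}(\saM)$-convergent to $x$, forcing $x\in C$. Theorem~\ref{t2} is precisely what makes this route legitimate: the controlling families $\mm,\nn$ that the atomic structure provides are not indexed by $\Gamma$, so O$_1$-convergence is not directly at hand and one genuinely needs to argue through O$_2$.

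For the construction, use atomicity to fix a maximal orthogonal family $(e_k)_{k\in K}$ of atoms of $M$; then $\sum_k e_k=\mathds 1$, each corner $p_SMp_S$ with $p_S:=\sum_{k\in S}e_k$ ($S\subseteq K$ finite) is finite-dimensional, and $p_S\uparrow\mathds 1$. Set $d_\gamma:=x_\gamma-x$, so $\|d_\gamma\|\le 2r=:c$ and $\psi(d_\gamma^2)=\rho_\psi(d_\gamma)^2\to 0$ for all $\psi\in M_\ast^+$. For $\varepsilon>0$ put $q_\gamma^\varepsilon:=\chi_{[\varepsilon,\infty)}(|d_\gamma|)$; then $d_\gamma\le|d_\gamma|\le\varepsilon\,\mathds 1+c\,q_\gamma^\varepsilon$, while $q_\gamma^\varepsilon\le\varepsilon^{-2}d_\gamma^2$ gives $\psi(q_\gamma^\varepsilon)\to 0$ for all $\psi\in M_\ast^+$. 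Each atom carries a normal state $\omega_k$ with $e_kye_k=\omega_k(y)e_k$, so $\omega_k(q_\gamma^\varepsilon)\to 0$; since $\|e_kq_\gamma^\varepsilon\|^2=\omega_k(q_\gamma^\varepsilon)$ and $p_Sq_\gamma^\varepsilon p_S$ is a finite sum of terms $e_kq_\gamma^\varepsilon e_l$, this yields $\|p_Sq_\gamma^\varepsilon p_S\|\to 0$ (for $S,\varepsilon$ fixed, as $\gamma\to\infty$), and a short estimate with the $2\times 2$ block form of the projection $q_\gamma^\varepsilon$ relative to $p_S$ --- bounding the off-diagonal block via $\|p_Sq_\gamma^\varepsilon(\mathds 1-p_S)\|^2\le\|p_Sq_\gamma^\varepsilon p_S\|$ --- upgrades this to $q_\gamma^\varepsilon\le(\mathds 1-p_S)+\mu(\gamma,S,\varepsilon)\,\mathds 1$ with $\mu(\gamma,S,\varepsilon)\to 0$. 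Hence, for each $S,\varepsilon$, eventually $\pm d_\gamma\le 2\varepsilon\,\mathds 1+c(\mathds 1-p_S)$. Now let $\dd:=\{\,2\varepsilon\,\mathds 1+c(\mathds 1-p_S):\varepsilon>0,\ S\subseteq K\text{ finite}\,\}$; since $\mathds 1$ and $\mathds 1-p_S$ commute, $\dd$ is filtered (combine two members by the smaller $\varepsilon$ and the union of the $S$'s) with $\inf\dd=0$ (again using $\sup_Sp_S=\mathds 1$), so $\mm:=\{x-b:b\in\dd\}$ is directed with $\vee\mm=x$, $\nn:=\{x+b:b\in\dd\}$ is filtered with $\wedge\nn=x$, and $x_\gamma$ lies eventually in $[x-b,x+b']$ for all $b,b'\in\dd$, as required.

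The single delicate point, and the reason atomicity rather than $\sigma$-finiteness is the natural hypothesis, is the bound $\pm d_\gamma\le 2\varepsilon\,\mathds 1+c(\mathds 1-p_S)$: the exceptional spectral projections $q_\gamma^\varepsilon$ need not be monotone, so the Egoroff-type squeeze against a faithful normal state that drives the $\sigma$-finite case of Theorem~\ref{t3}(iv) is unavailable; instead $q_\gamma^\varepsilon$ is confined, up to a norm-small scalar, inside $\mathds 1-p_S$, and this confinement is possible only because the corners $p_SMp_S$ are finite-dimensional. Everything else --- the opening reduction, the routine order computation that $\dd$ is filtered with infimum $0$ (using $p_S\uparrow\mathds 1$), and the passage from O$_2$- to $\tau_{\text{O}}$-convergence --- is straightforward.
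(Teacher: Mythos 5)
Your argument is correct, and its skeleton is the same as the paper's: reduce to showing that a bounded $s(M,M_\ast)$-null net is O$_2$-convergent to $0$ by squeezing it between $\pm(\text{small scalar}\cdot\mathds 1+\text{const}\cdot(\mathds 1-p))$ for finite-rank projections $p$ coming from the atoms, and then invoke Theorem \ref{t2}(i) to pass from $\tau_{\text{O$_2$}}(\saM)$ to $\tau_{\text{O$_1$}}(\saM)$. The only genuine difference is how the squeeze is produced. The paper works directly with $y_\gamma:=x_\gamma-x$: it decomposes $y_\gamma$ into its four corners relative to a finite-rank projection $e_\lambda$, notes that $e_\lambda y_\gamma e_\lambda$ and $e_\lambda y_\gamma^2 e_\lambda$ tend to $0$ in norm because the finite-dimensional algebra $e_\lambda Me_\lambda$ carries a unique linear topology (your Cauchy--Schwarz bound $\|p_Sq_\gamma^\varepsilon(\mathds 1-p_S)\|^2\le\|p_Sq_\gamma^\varepsilon p_S\|$ appears there as $\|e_\lambda y_\gamma(\mathds 1-e_\lambda)\|^2\le\|e_\lambda y_\gamma^2e_\lambda\|$), and dominates the remaining corner by a multiple of $\mathds 1-e_\lambda$; it also ties the scalar to the projection by taking $d_\lambda^{-1}$ (the reciprocal of $\dim e_\lambda Me_\lambda$) in place of your free parameter $\varepsilon$, so that a single decreasing net $p_\lambda=d_\lambda^{-1}\mathds 1+(\mathds 1-e_\lambda)$ does all the controlling. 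Your detour through the spectral projections $q_\gamma^\varepsilon=\chi_{[\varepsilon,\infty)}(|d_\gamma|)$ and the confinement estimate is sound but avoidable: applying the corner decomposition to $d_\gamma$ itself yields $\pm d_\gamma\le\varepsilon\mathds 1+c(\mathds 1-p_S)$ eventually in one step. Both routes deliver the same essential point --- finite-dimensionality of the corners replaces the Egoroff-type argument of the $\sigma$-finite case --- so the difference is one of economy rather than of substance.
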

\begin{proof}
Suppose that $(x_\gamma)_{\gamma\in\Gamma}$ is a net in the unit ball of $\saM$ that converges to $x$ w.r.t. $s(M,M_\ast)$.  We show that the net  $(x_\gamma)_{\gamma\in\Gamma}$  is O$_2$-convergent to $x$.  This would imply that $(x_\gamma)_{\gamma\in\Gamma}$ converges to $x$ w.r.t. $\tau_{\text{O$_2$}}(\saM)$.  But the latter is equal to $\tau_{\text{O$_1$}}(\saM)$ by Theorem \ref{t2}.

By assumption we can find an increasing net $(e_\lambda)_{\lambda\in\Lambda}$ of projections of finite rank in $M$ converging to $\mathds 1$ w.r.t. $s(M,M_\ast)$.  Let $d_\lambda$ denote the (linear) dimension of $e_\lambda M e_\lambda$.  Then $(d_\lambda^{-1})_{\lambda_\in\Lambda}$ is decreasing and convergent to $0$ and therefore the net $(p_\lambda)_{\lambda\in\Lambda}$ defined by
\[p_\lambda:=d_\lambda^{-1}\,\mathds 1\,+\,(\mathds 1-e_\lambda)\]
is decreasing to $0$ in $\saM$.  Fix an arbitrary $\lambda\in\Lambda$.  Since $e_\lambda$ is of finite rank, $e_\lambda M e_\lambda$ is a finite dimensional algebra over $\C$; hence $eMe$ admits only one topology compatible with the vector space structure.  Therefore, $(e_\lambda y_\gamma e_\lambda)_{\gamma\in\Gamma}$ is norm convergent to $0$ and, moreover, since $(y_\gamma)_{\gamma\in\Gamma}$ is bounded, we also have that $(e_\lambda y_\gamma^2 e_\lambda)_{\gamma\in \Gamma}$ is norm convergent to $0$. Thus, there exists $\gamma_0\in\Gamma$ such that
\[\Vert e_\lambda y_\gamma e_\lambda\Vert+2\Vert e_\lambda y_\gamma^2 e_\lambda\Vert\,<\,d_\lambda^{-1}\quad\text{for every }\gamma\ge \gamma_0.\]
Then
\[\Vert e_\lambda y_\gamma e_\lambda+e_\lambda y_\gamma (\mathds 1-e_\lambda)+(\mathds 1-e_\lambda)y_\gamma e_\lambda\Vert<d_\lambda^{-1}\quad\text{for every }\gamma\ge \gamma_0,\]
and therefore
\[-p_\lambda\,\le\,y_\gamma= e_\lambda y_\gamma e_\lambda+e_\lambda y_\gamma (\mathds 1-e_\lambda)+(\mathds 1-e_\lambda)y_\gamma e_\lambda+(\mathds 1-e_\lambda)y_\gamma(\mathds 1-e_\lambda)\,\le\,p_\lambda,\]
for every $\gamma\ge \gamma_0$.  This shows that $(y_\gamma)_{\gamma\in\Gamma}$ is O$_2$-convergent to $0$.   From this follows that $(x_\gamma)_{\gamma\in\Gamma}$ is O$_2$-convergent to $x$.
\end{proof}

\begin{prob}
Are the restrictions of $s(M,M_\ast)$ and $\tau_{\text{O}}(\saM)$ to bounded parts of $\saM$ equal for every von Neumann algebra $M$?  If not, is it possible to characterize  these von Neumann algebras?
\end{prob}

\bibliographystyle{amsplain}
\providecommand{\bysame}{\leavevmode\hbox to3em{\hrulefill}\thinspace}
\providecommand{\MR}{\relax\ifhmode\unskip\space\fi MR }
\providecommand{\MRhref}[2]{%
  \href{http://www.ams.org/mathscinet-getitem?mr=#1}{#2}
}
\providecommand{\href}[2]{#2}

\end{document}